\newtheorem{theorem}{Theorem}
\newtheorem{corollary}[theorem]{Corollary}
\theoremstyle{definition}
\newtheorem{example}[theorem]{Example}
\newtheorem{remark}[theorem]{Remark}
\DeclareMathOperator{\Sym}{Sym}
\DeclareMathOperator{\GL}{GL}
\DeclareMathOperator{\charr}{char\,}
\DeclareMathOperator{\tr}{tr}
\newcommand{\N}{{\mathbb{N}}}
\newcommand{\C}{{\mathbb{C}}}
\newcommand{\op}{\mathrm{op}}
\newcommand{\action}{\!\circ}
\newcommand{\KXd}{K\langle X_d\rangle}
\newcommand{\KSS}{\big(\KXd,\action\big)}
\newcommand{\Cuv}{\C \langle u,v\rangle}
\newcommand{\CuvD}{{\mathbb C}\langle u,v\rangle^{D_{2n}}}
\newcommand{\Dn}{D_{2n}}
\newcommand{\CuvDc}{\big(\C\langle u,v\rangle^{D_{2n}},\circ\big )}
\begin{document}

\title[On dihedral invariants of the free associative algebra of rank two]
{On dihedral invariants of the free associative algebra of rank two}

\author[Silvia Boumova, Vesselin Drensky, {\c S}ehmus F{\i}nd{\i}k]
{Silvia Boumova$^{1}$, Vesselin Drensky$^{2}$, {\c S}ehmus F{\i}nd{\i}k$^{1,3}$}
\address{$^{1}$Faculty of Mathematics and Informatics,
Sofia University, Sofia, Bulgaria and Institute of Mathematics and Informatics,
Bulgarian Academy of Sciences,
1113 Sofia, Bulgaria\\}
\email{boumova@fmi.uni-sofia.bg}
\address{$^{2}$Institute of Mathematics and Informatics,
Bulgarian Academy of Sciences,
1113 Sofia, Bulgaria}
\email{drensky@math.bas.bg}
\address{$^{3}$Department of Mathematics,
\c{C}ukurova University, 01330 Balcal\i,
 Adana, Turkey\\}
\email{sfindik@cu.edu.tr}

\thanks
{The research was partially supported by the Bulgarian Ministry of Education and Science, Scientific Programme ``Enhancing the
Research Capacity in Mathematical Sciences (PIKOM)'', No. DO1-67/05.05.2022}

\subjclass[2020]{13A50, 15A72, 16S10, 16W22.}
\keywords{classical invariant theory, noncommutative invariant theory.}

\begin{abstract}

Let $\KXd$ denote the free associative algebra of rank $d \geq 2$ over a field $K$. By results of Lane (1976) and Kharchenko (1978),
the algebra of invariants $\KXd^G$ is free for any subgroup $G \leq \GL_d(K)$ and any field $K$.
Koryukin (1984) introduced an additional action of the symmetric group $\Sym(n)$ on the homogeneous component of degree $n$ of $\KXd$,
given by permuting the positions of the variables. This endows $\KXd$ with the structure of a $\KSS$-$S$-algebra.
With respect to this action, Koryukin proved that the invariant algebra $\KXd^G$ is finitely generated for every reductive group $G$.

In this paper we study the algebra $\CuvD$ of invariants under the action of the dihedral group $\Dn$ on the free associative algebra $\Cuv$ of rank $2$.
We compute the Hilbert series of $\CuvD$ and construct an explicit set of generators for $\CuvD$ as a free algebra.
Furthermore, we describe a finite generating set for the $S$-algebra $\CuvDc$.
\end{abstract}

\maketitle

\section{\textbf{Introduction}}
Let $K$ be a field of characteristic zero, and let $K[X_d] = K[x_1, \ldots, x_d]$ denote the free associative commutative unital algebra of rank $d > 1$ over $K$.
For a subgroup $G$ of $\GL_d(K)$ the algebra of invariants $K[X_d]^G$ consists of all $G$-invariant polynomials.
The commutative case is well-studied, with foundational results stemming from Hilbert's 14th problem, posed in 1990 at the International Congress of Mathematicians in Paris \cite{Hi}.
In 1916, E. Noether \cite{No1} proved that for any finite group $G$, the algebra $K[X_d]^G$ is finitely generated when $\text{char}(K) = 0$.
In 1926 she extended this result to arbitrary characteristic \cite{No2}.
For reductive groups in characteristic zero, finite generation also follows from Hilbert's work.
However in 1950 Nagata \cite{Na} demonstrated that finite generation fails in general for infinite groups, providing a counterexample.

We now turn to the noncommutative setting, replacing $K[X_d]$ with an algebra sharing its key properties. The natural candidate is the free associative algebra $\KXd$
(i.e., the algebra of noncommuting polynomials in $d$ variables), which satisfies the same universal property.
Remarkably, the analogue of Noether's theorem on finite generation holds for $\KXd^G$ only in a highly restricted case:
for finite subgroups $G \leq \text{GL}_d(K)$, the work of Koryukin \cite{Kor}, Dicks-Formanek \cite{DiFo}, and Kharchenko \cite{Kh2}
established that $\KXd^G$ is finitely generated if and only if $G$ acts by scalar multiplication on the vector space spanned by $X_d$.
Later Koryukin \cite{Kor} generalized this to infinite groups.

In 1984 Koryukin \cite{Kor} introduced an additional action of the symmetric group $\Sym(n)$ on the homogeneous component of degree $n$ of $\KXd$,
given by permuting the positions of the variables. This endows $\KXd$ with the structure of a $\KSS$-$S$-algebra.
With respect to this action, Koryukin proved that the invariant algebra $\KXd^G$ is finitely generated for every reductive group $G$.

One of the natural problems both in the commutative and noncommutative case is the description of the invariants of important groups.
In this paper we study the algebra $\CuvD$ of invariants under the action of the dihedral group $\Dn$ on the free associative algebra $\Cuv$ of rank $2$.
We describe several important properties of $\CuvD$. We compute its Hilbert series and construct a homogeneous set of generators of $\CuvD$ as a free algebra.
Finally, we find a finite generating set of $\CuvD$ as an $S$-algebra.

Similar problems for the noncommutative symmetric polynomials, i.e. for the algebra of invariants $K\langle X_d\rangle^{\Sym(d)}$ were solved in \cite{BDDK}.
For other recent results on noncommutative invariants of the dihedral group see \cite{D4} and \cite{D5}.

\section{\textbf{Preliminaries}}

We assume the ground field to be the field of complex numbers, i.e.,
$K = \C$, although many results hold over any field $K$ of characteristic zero.

\subsection{Classical Commutative Invariant Theory}

The general linear group acts naturally on the $d$-dimensional complex vector space $V_d$ with basis $\{v_1,\ldots,v_d\}$. The coordinate functions $x_i:V_d\to {\mathbb C}$ are defined by
\[
x_i(\xi_1v_1+\cdots+\xi_dv_d)=\xi_i,\quad \xi_1,\ldots,\xi_d\in \C, \quad i=1,\ldots,d.
\]

This induces an action of on the algebra of polynomial functions $\C[X_d]={\mathbb C}[x_1,\ldots,x_d]$ via
\[
g(f):v\to f\big(g^{-1}(v)\big),\quad g\in \GL_d(\C),\, f(X_d)\in \C[X_d],\, v\in V_d.
\]

For a subgroup $G$ of $\GL_d(\C)$, the algebra of $G$-invariants consists of all polynomials fixed under this action:
\[
\C[X_d]^G=\big\{f\in \C[X_d]\mid g(f)=f\text{ for all }g\in G\big\}.
\]

For our purposes it is more convenient to assume that $\GL_d(\C)$ acts canonically on the vector space $KX_d$ with basis $X_d$ and to extend its action diagonally to $\C[X_d]$ by
\[
g\big(f(x_1,\ldots,x_d)\big)=f\big(g(x_1),\ldots,g(x_d)\big),\quad g\in \GL_d(\C), f\in \C[X_d].
\]
The diagonal action of its opposite group $\GL_d^\op(\C)$ induced by the canonical action of $\GL_d^\op(\C)$ on the vector space $KX_d$
is the same as the action of $\GL_d(\C)$ on the polynomial functions $\C[X_d]$.
The mapping $g\to g^{-1}$ defines an isomorphism of $\GL_d(\C)$ and $\GL_d^\op(\C)$ and both actions give the same algebras of invariants.

\subsection{Foundational Theorems}

The following classical theorems form the cornerstone of commutative invariant theory.

\begin{theorem}[\rm (Endlichkeitssatz of Emmy Noether~\cite{No1})]
\label{finite generation Noether}
For a finite subgroup $G$ of $\GL_d(K)$ and a field $K$ of characteristic zero, the algebra of invariants $K[X_d]^G$ is finitely generated.
 Moreover, it admits a system of homogeneous generators whose degrees are bounded by the order of $G$.
\end{theorem}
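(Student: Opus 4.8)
The plan is to prove both assertions — finite generation and the bound $|G|$ on the degrees of generators — simultaneously, by the classical symmetric-function argument, since over a field of characteristic zero the degree bound comes out of the same mechanism that yields finiteness. Write $n = |G|$ and $G = \{g_1, \dots, g_n\}$. Since $\mathrm{char}(K) = 0$, the Reynolds operator $R \colon K[X_d] \to K[X_d]^G$, defined by $R(f) = \tfrac1n \sum_{g \in G} g(f)$, is a grading-preserving projection onto the invariants. First I would record that $R$ sends a monomial to a scalar multiple of its orbit sum, so that every homogeneous invariant $f$ of degree $k$ satisfies $f = R(f)$ and is therefore a $K$-linear combination of the orbit sums $\sigma(m) = \sum_{g \in G} g(m)$ over monomials $m$ of degree $k$. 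Consequently it suffices to show that $\sigma(m)$ lies in the subalgebra generated by invariants of degree at most $n$ whenever $\deg m > n$.

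The central step is to package all orbit sums of a fixed degree into a single power sum and then invoke Newton's identities. I would introduce auxiliary commuting indeterminates $z_1, \dots, z_d$ and, for each $g \in G$, the linear form $\ell_g = z_1\, g(x_1) + \cdots + z_d\, g(x_d) \in K[X_d][z_1, \dots, z_d]$. Because $h(\ell_g) = \ell_{hg}$, the group $G$ permutes the $n$ forms $\ell_{g_1}, \dots, \ell_{g_n}$ by left multiplication, so their elementary symmetric polynomials $e_j = e_j(\ell_{g_1}, \dots, \ell_{g_n})$ and their power sums $p_k = \sum_{g} \ell_g^{\,k}$ are $G$-invariant. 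Since each $g$ is an algebra homomorphism, the multinomial expansion shows that the coefficient of $z^\beta$ (with $|\beta| = k$) in $p_k$ equals $\binom{k}{\beta}\,\sigma(x^\beta)$, a fixed positive-integer multiple of an orbit sum of degree $k$; likewise the coefficients of $e_j$, and of $p_1, \dots, p_n$, are invariants of degree at most $n$. Because there are only $n$ forms $\ell_g$, we have $e_j = 0$ for $j > n$, so Newton's identities give, for every $k > n$,
\[
p_k = e_1 p_{k-1} - e_2 p_{k-2} + \cdots + (-1)^{n-1} e_n\, p_{k-n}.
\]
By descending induction on the index, this expresses $p_k$ as a polynomial with integer coefficients in $e_1, \dots, e_n$ and $p_1, \dots, p_n$.

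Comparing the coefficient of $z^\beta$ on both sides, and dividing by the positive integer $\binom{k}{\beta}$ (legitimate in characteristic zero), then exhibits each orbit sum $\sigma(x^\beta)$ of degree $k > n$ as a polynomial in the coefficients of $e_1, \dots, e_n, p_1, \dots, p_n$, all of which are invariants of degree at most $n$. Together with the first paragraph this shows that $K[X_d]^G$ is generated by the invariants $R(m)$ with $\deg m \le n = |G|$; as there are only finitely many monomials of degree at most $n$ in $d$ variables, the generating set is finite and homogeneous with degrees bounded by $|G|$, as claimed. The main obstacle — and the only genuinely nonformal point — is the bookkeeping in the middle paragraph: one must verify carefully that coefficient extraction from $p_k$ and from $e_j$ returns precisely orbit sums and degree-$\le n$ invariants, and that the recursion above can be read off coefficientwise. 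This is exactly where $\mathrm{char}(K) = 0$ enters, namely to form the Reynolds operator and to divide by the multinomial factors $\binom{k}{\beta}$.
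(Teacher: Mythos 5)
The paper itself gives no proof of this theorem---it is stated as a classical result with a citation to Noether's original article---and your argument is precisely Noether's classical proof from that source: the Reynolds operator reduces every invariant to a linear combination of orbit sums, the auxiliary forms $\ell_g = z_1 g(x_1) + \cdots + z_d g(x_d)$ package orbit sums of degree $k$ into the power sums $p_k$, and Newton's identities (with $e_j = 0$ for $j > |G|$) express each $p_k$ with $k > |G|$, hence each orbit sum of degree $k > |G|$, in terms of invariants of degree at most $|G|$. The proof is correct as written, and the two places where $\mathrm{char}(K)=0$ is genuinely needed---forming $R$ and dividing by $\binom{k}{\beta}$---are exactly the ones you identify.
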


\begin{theorem}[Chevalley--Shephard--Todd, \cite{Che, ShT}]
\label{pseudoreflections}
For a finite group $G$ over a field $K$ of characteristic zero, the algebra of invariants $K[X_d]^G$ is isomorphic to a polynomial algebra
(i.e., has  a system of algebraically independent generators) if and only if $G< \GL_d(K)$ is generated by pseudo-reflections
(matrices of finite order with a conjugate that is a diagonal matrix of the form $\mathrm{diag}(1, \ldots , 1, \xi)$, where $\xi\neq  1$ is a root of unity).
\end{theorem}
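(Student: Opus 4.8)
The plan is to prove the two implications separately, the common tools being the averaging (Reynolds) operator $\pi(f)=\frac{1}{|G|}\sum_{g\in G}g(f)$, which is available precisely because $\mathrm{char}(K)=0$, and Molien's formula for the Hilbert series of the invariants,
\[
H(K[X_d]^G,t)=\frac{1}{|G|}\sum_{g\in G}\frac{1}{\det(1-tg)}.
\]
Throughout I write $S=K[X_d]$, $R=K[X_d]^G$, and let $R_+\subset R$ be the ideal of elements of positive degree.

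For the direction that a pseudo-reflection group has polynomial invariants, I would follow Chevalley and choose homogeneous invariants $f_1,\dots,f_m\in R_+$ that minimally generate the Hilbert ideal $SR_+$ of $S$, meaning no $f_i$ lies in the ideal of $S$ generated by the remaining ones. The crux is a relation lemma: if $p_1f_1+\cdots+p_mf_m=0$ with the $p_i\in S$ homogeneous, then $p_1\in SR_+$. Its proof rests on the one geometric fact that characterises a pseudo-reflection $s$ with reflecting hyperplane $\ker\ell$, namely that $p-s(p)$ is divisible by the linear form $\ell$ for every $p\in S$; this divisibility, fed into an induction on degree after applying $\pi$ and the generating reflections of $G$, yields the lemma. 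Granting it, a standard argument shows that $f_1,\dots,f_m$ are algebraically independent and generate $R$, so $R$ is a polynomial algebra (and in fact $m=d$).

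For the converse, suppose $R=K[f_1,\dots,f_d]$ with $\deg f_i=d_i$. I would read off two numerical identities from Molien's formula by inspecting the Laurent expansion at $t=1$: the order of the pole of $\det(1-tg)^{-1}$ there equals $\dim\ker(1-g)$, which is $d$ only for $g=1$ and exactly $d-1$ precisely for the pseudo-reflections. Matching the coefficients of $(1-t)^{-d}$ and $(1-t)^{-(d-1)}$ in
\[
\prod_{i=1}^{d}\frac{1}{1-t^{d_i}}=\frac{1}{|G|}\sum_{g\in G}\frac{1}{\det(1-tg)}
\]
gives $d_1\cdots d_d=|G|$ and, after pairing each reflection $s$ with $s^{-1}$, the identity $\sum_{i}(d_i-1)=r$, where $r$ is the number of pseudo-reflections in $G$. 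Now let $H\le G$ be the (necessarily normal) subgroup generated by all pseudo-reflections of $G$; since conjugation preserves reflections, $H$ and $G$ contain the same reflections, and by the first direction $R'=K[X_d]^H$ is polynomial, say $R'=K[h_1,\dots,h_d]$ with $\deg h_i=e_i$. The same identities give $e_1\cdots e_d=|H|$ and $\sum_i(e_i-1)=r=\sum_i(d_i-1)$, hence $\sum_ie_i=\sum_id_i$. Because $R\subseteq R'$ are graded polynomial rings of the same dimension $d$, $R'$ is a module-finite Cohen--Macaulay, hence free, graded $R$-module, so
\[
P(t):=\frac{H(R',t)}{H(R,t)}=\frac{\prod_{i}(1-t^{d_i})}{\prod_{i}(1-t^{e_i})}
\]
is a genuine polynomial with $P(0)=1$ and $P(1)=[G:H]$. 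Its degree as a rational function is $\sum_id_i-\sum_ie_i=0$, which forces $P(t)\equiv 1$; therefore $[G:H]=1$ and $G=H$ is generated by pseudo-reflections.

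The main obstacle is the relation lemma of the first direction: all the downstream work is bookkeeping with Hilbert series, but the lemma is the only place where the defining property of pseudo-reflections genuinely enters, and the induction must be arranged so that applying $\pi$ never raises degrees. A secondary technical point, needed in the converse, is the freeness of $R'$ over $R$; I would justify it through the Cohen--Macaulay property of the polynomial ring $R'$ together with regularity of $R$, rather than by exhibiting an explicit basis.
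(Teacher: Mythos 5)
The paper itself offers no proof of Theorem \ref{pseudoreflections}: it is stated as classical background and attributed to Chevalley and Shephard--Todd \cite{Che, ShT}, and nothing later in the paper depends on its proof. So the only comparison available is with the standard literature argument, and your outline essentially \emph{is} that argument, correctly arranged. For sufficiency you reproduce Chevalley's proof: homogeneous invariants $f_1,\dots,f_m$ minimally generating the Hilbert ideal $SR_+$, and the relation lemma whose induction rests on the single geometric input that $p-s(p)$ is divisible by the linear form cutting out the fixed hyperplane of a pseudo-reflection $s$ (valid for complex pseudo-reflections, not just involutions, since $s$ fixes that hyperplane pointwise), combined with the Reynolds operator. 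For necessity, your route --- extracting $d_1\cdots d_d=|G|$ and $\sum_i(d_i-1)=r$ from the Laurent expansion of Molien's series at $t=1$, then comparing $G$ with its reflection subgroup $H$ via freeness of $K[X_d]^H$ over $K[X_d]^G$ and the degree count $\sum_i e_i=\sum_i d_i$ --- is the standard modern proof (Bourbaki; Stanley's survey), and the computations you indicate are right: the pole order of $\det(1-tg)^{-1}$ at $t=1$ is $\dim\ker(1-g)$, and pairing $s$ with $s^{-1}$ gives $\sum_s(1-\xi_s)^{-1}=r/2$.

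Two places you pass over quickly would need genuine (if standard) work in a complete write-up. First, in the forward direction, the deduction that the $f_i$ are algebraically independent is not a formality: it is the partial-derivative argument that applies the relation lemma to a minimal algebraic relation among the $f_i$ and invokes Euler's identity, so characteristic zero enters there a second time. Second, in the converse, $P(1)=[G:H]$ requires identifying the rank of $K[X_d]^H$ as a free $K[X_d]^G$-module, which needs that the fraction fields of the invariant rings are the fixed fields $\mathrm{Frac}(S)^H$ and $\mathrm{Frac}(S)^G$ and that $\mathrm{Frac}(S)/\mathrm{Frac}(S)^G$ is Galois of degree $|G|$; you should also record why $K[X_d]^H$ is module-finite over $K[X_d]^G$ (integrality of $K[X_d]$ over $K[X_d]^G$ plus Noetherianity). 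Neither point is a flaw in the approach --- both are standard --- but they are exactly what a referee would ask you to fill in.
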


The algebra $K[X_d]^G$ is naturally graded, decomposing into homogeneous components:
\[
K[X_d]^G=K\oplus \left(K[X_d]^G\right)^{(1)} \oplus \left(K[X_d]^G\right)^{(2)} \oplus \cdots.
\]
Its Hilbert (or Poincar\'e) series is given by
\[
H\big(K[X_d]^G,t\big)=\sum_{n\geq 0}\dim \left(K[X_d]^G\right)^{(n)}  t^n
\]
and by the Hilbert–Serre theorem, this series is rational:
\[
H\big(K[X_d]^G,t\big) = p(t) \prod_{i=1}^m\frac{1}{1-t^{a_i}},\quad p(t)\in{\mathbb Z}[t].
\]

The third fundamental result, due to Molien \cite{Mo}, provides an explicit formula for the Hilbert series:

\begin{theorem}\label{Molien formula}
For a finite group and $\charr (K)=0$,
\[
H\big(K[X_d]^G,t\big)=\frac{1}{\vert G\vert}\sum_{g\in G}\frac{1}{\det\big(1-gt\big)}.
\]
\end{theorem}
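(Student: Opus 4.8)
The plan is to convert dimensions of graded invariant subspaces into traces by means of the averaging (Reynolds) operator, and then to evaluate the resulting graded traces one group element at a time. First I would record that each homogeneous component $K[X_d]^{(n)}$ is a finite-dimensional $G$-module and that, since $\charr(K)=0$ and $|G|<\infty$, the operator $R_n=\frac{1}{|G|}\sum_{g\in G}g$ is a well-defined idempotent on $K[X_d]^{(n)}$ projecting onto the invariant subspace $\left(K[X_d]^G\right)^{(n)}$. As the trace of an idempotent equals the dimension of its image, this gives
\[
\dim\left(K[X_d]^G\right)^{(n)}=\tr(R_n)=\frac{1}{|G|}\sum_{g\in G}\tr\big(g|_{K[X_d]^{(n)}}\big).
\]
Multiplying by $t^n$, summing over $n$, and interchanging the two summations (legitimate over the formal power series ring, since the sum over $G$ is finite) reduces the problem to computing, for each fixed $g$, the graded trace $\sum_{n\geq 0}\tr\big(g|_{K[X_d]^{(n)}}\big)t^n$.

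Next I would evaluate this graded trace by diagonalization. Every $g\in G$ has finite order, so in characteristic zero it is diagonalizable over the algebraic closure; let $\lambda_1,\ldots,\lambda_d$ be its eigenvalues on the degree-one component $KX_d$. The action on $K[X_d]^{(n)}$ is the $n$-th symmetric power, whose eigenvalues are the monomials $\lambda_1^{a_1}\cdots\lambda_d^{a_d}$ with $a_1+\cdots+a_d=n$. Hence $\tr\big(g|_{K[X_d]^{(n)}}\big)$ equals the complete homogeneous symmetric function $h_n(\lambda_1,\ldots,\lambda_d)$, and the standard generating-function identity yields
\[
\sum_{n\geq 0}\tr\big(g|_{K[X_d]^{(n)}}\big)t^n=\prod_{i=1}^d\frac{1}{1-\lambda_i t}=\frac{1}{\det(1-gt)},
\]
the final equality holding because $\det(1-gt)=\prod_{i=1}^d(1-\lambda_i t)$. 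Substituting this into the averaged expression produces the claimed formula.

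The argument is short, and its delicate points are conceptual rather than computational. One must verify that the Reynolds operator is genuinely a projection and that its trace computes the invariant dimension, which is where $\charr(K)=0$ enters essentially through the invertibility of $|G|$ and the semisimplicity of the $G$-modules $K[X_d]^{(n)}$. I expect the main obstacle to be the eigenvalue computation on symmetric powers together with the passage to the algebraic closure: although individual eigenvalues $\lambda_i$ may be irrational or complex, the product $\prod_{i=1}^d(1-\lambda_i t)=\det(1-gt)$ has coefficients in the base field, so the final formula is valid over $K$ itself and independent of the chosen splitting field. Making this identification of the symmetric-power spectrum precise, and confirming that the resulting generating function is exactly the reciprocal of $\det(1-gt)$, is the step I would treat with the most care.
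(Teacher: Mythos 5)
Your proof is correct, but there is nothing in the paper to compare it against: the paper states this theorem as classical background, citing Molien's 1897 paper, and gives no proof of it. What you have written is the standard averaging argument, and all of its steps are sound. In particular, you correctly isolate the two places where $\charr(K)=0$ enters (invertibility of $\vert G\vert$, so that the Reynolds operator is a well-defined idempotent, and the identity ``trace of an idempotent equals the dimension of its image,'' which fails in positive characteristic), and you correctly handle the passage to the algebraic closure: the eigenvalues $\lambda_i$ of $g$ on $KX_d$ may lie outside $K$, but $\det(1-gt)=\prod_{i=1}^d(1-\lambda_i t)$ has coefficients in $K$, so the formula descends. One further point worth noting, which the paper itself raises in its preliminaries: there are two natural actions on $K[X_d]$ (the contravariant action $f\mapsto f\circ g^{-1}$ and the diagonal action on the variables), and your computation uses the eigenvalues of $g$ itself on $KX_d$, i.e.\ the diagonal action; since $g\mapsto g^{-1}$ is a bijection of $G$, averaging over the group makes the two conventions give the same sum, so this choice is harmless.
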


\subsection{Noncommutative Invariant Theory}

In one of the principal branches of noncommutative invariant theory, we replace the commutative polynomial algebra $\mathbb{C}[X_d]$ with an algebra sharing its key properties.
For comprehensive surveys on this subject, we recommend the works of Formanek \cite{Fo} and the second named author of this paper \cite{Dr1}.

Our primary object of study is the free associative algebra $\KXd$.
Among unital associative algebras, $\KXd$ satisfies the same universal property as $K[X_d]$ does in the commutative case:
any map $X_d \to R$ to an algebra $R$ extends uniquely to a homomorphism $\KXd \to R$. T
he general linear group $\GL_d(K) = \GL(KX_d)$ acts naturally on the vector space $KX_d$ with basis $X_d$, and this action extends to $\KXd$ via algebra homomorphisms:
\[
g(f(x_1,\cdots, x_d )) = f(g(x_1),\cdots, g(x_d)),\, g \in GL_d , f(x_1,\cdots, x_d ) \in \KXd.	
\]

For any subgroup $G \leq \GL_d(K)$, the algebra of $G$-invariants $\KXd^G$ consists of all polynomials fixed by this action.

\subsection{Fundamental Results on Finite Generation}

The following results demonstrate the stark contrast between commutative and noncommutative invariant algebras regarding finite generation.
These were established independently by Dicks-Formanek \cite{DiFo} and Kharchenko \cite{Kh2} for finite groups, and extended by Koryukin \cite{Kor} to arbitrary groups.

\begin{theorem}
\label{noncommutative finite generation}
Let $G \leq \GL_d(K)$ over any field $K$, and let $KY_m$ be the minimal subspace of $KX_d$ with $\KXd^G \subseteq K\langle Y_m\rangle$.
Then the algebra $\KXd^G$ is finitely generated if and only if $G$ acts on $KY_d$ by scalar multiplication.
\end{theorem}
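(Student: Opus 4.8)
The plan is to combine the freeness of $\KXd^G$ with a Hilbert-series computation, after first reducing to the minimal subspace $KY_m$. By the theorems of Lane and Kharchenko, $\KXd^G$ is a free associative algebra; if it is free on a homogeneous generating set with $c_n$ generators in degree $n$, then $H(\KXd^G,t)=1/\big(1-\sum_{n\ge1}c_nt^n\big)$, so finite generation is equivalent to $\sum_{n\ge1}c_nt^n$ being a polynomial, i.e.\ to the numerator of $H(\KXd^G,t)$ in lowest terms being constant. To pass to $KY_m$, note that each $g\in G$ fixes $\KXd^G$ pointwise and is an algebra automorphism, so it permutes the subspaces $W\subseteq KX_d$ with $\KXd^G\subseteq K\langle W\rangle$; since such subspaces are closed under intersection (using $K\langle W_1\rangle\cap K\langle W_2\rangle=K\langle W_1\cap W_2\rangle$ for subspaces of $KX_d$), the minimal one $KY_m$ is unique, hence $G$-stable. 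Then $G$ acts on $V:=KY_m$ by a representation $\rho$, we have $\KXd^G=K\langle Y_m\rangle^{G}$, and minimality means no proper subspace of $V$ carries all invariants. It remains to prove that $K\langle Y_m\rangle^{G}$ is finitely generated if and only if $\rho$ is scalar.

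For the implication from scalar actions, suppose $\rho(g)=\lambda_g\,\id$ for a character $\lambda\colon G\to K^{*}$. A degree-$n$ monomial in a basis of $V$ is then scaled by $\lambda_g^{\,n}$, so it is invariant precisely when $n$ is divisible by the exponent $e$ of the image of $\lambda$ (a finite cyclic group, since finite subgroups of $K^{*}$ are cyclic); if that image has infinite exponent, only constants are invariant. In the finite case the invariants are freely generated by the $m^{e}$ monomials of degree $e$ and $H(\KXd^G,t)=1/(1-m^{e}t^{e})$, so $K\langle Y_m\rangle^{G}$ is finitely generated.

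For the converse I would first treat finite $G$ through the noncommutative Molien formula. Since the degree-$n$ component of $\KXd$ is $V^{\otimes n}$ as a $G$-module and $\tr\big(\rho(g)^{\otimes n}\big)=\big(\tr\rho(g)\big)^{n}$,
\[
H(\KXd^G,t)=\frac{1}{|G|}\sum_{g\in G}\frac{1}{1-t\,\tr\rho(g)}.
\]
Because the eigenvalues of each $\rho(g)$ are roots of unity, $|\tr\rho(g)|\le m$, with equality exactly when $\rho(g)$ is scalar. Assuming $\rho$ is not scalar, some $\rho(g_0)$ has two distinct eigenvalues, and the goal becomes to show the numerator of the displayed function is non-constant, so that infinitely many $c_n$ are nonzero. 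If some $\tr\rho(g)=0$ this is immediate: then $H(\KXd^G,t)$ tends to a nonzero limit as $t\to\infty$, whereas $1/H$ would be a polynomial vanishing at infinity. Otherwise I would put everything over the common denominator $\prod_{\mu}(1-\mu t)$, the product over the distinct nonzero values $\mu=\tr\rho(g)$, and compare leading coefficients to exhibit a root of the numerator that is not of the form $1/\mu$; the minimality of $V$ is what prevents the complete cancellation seen in the scalar computation above.

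The hard part is exactly this last step — showing that for a non-scalar $\rho$ the Molien numerator never divides $\prod_{\mu}(1-\mu t)$. I expect the cleanest route is to separate the contribution of the scalar elements of $G$, which by themselves reproduce the clean series $1/(1-m^{e}t^{e})$, from that of the non-scalar elements, whose distinct eigenvalues force an extra, uncancelled factor in the numerator; locating that factor via the dominant pole at $t=1/m$ should finish the finite case. For arbitrary (in particular infinite) $G$ the Molien formula is unavailable, although each $\big(\KXd^G\big)^{(n)}\subseteq V^{\otimes n}$ is still finite-dimensional so the Hilbert series is defined; here one follows Koryukin \cite{Kor}, whose argument extends the non-scalar obstruction to the general case and yields the same ``infinitely many free generators'' conclusion.
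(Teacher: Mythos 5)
You should first be aware that the paper does not prove this statement at all: it is quoted in the Preliminaries as a known result, attributed for finite groups to Dicks--Formanek \cite{DiFo} and Kharchenko \cite{Kh2} and for arbitrary groups to Koryukin \cite{Kor}. So the benchmark is those original arguments, and your strategy for the finite, characteristic-zero case (freeness of $\KXd^G$ plus the Poincar\'e/Molien series) is in fact the historical Dicks--Formanek route. Your preliminary steps are sound: the $G$-stability of the minimal subspace $KY_m$ via $K\langle W_1\rangle\cap K\langle W_2\rangle=K\langle W_1\cap W_2\rangle$, the identification $\KXd^G=K\langle Y_m\rangle^G$, and the direction ``scalar action $\Rightarrow$ finitely generated'' are all correct. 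The problem is that the converse, which is the entire content of the theorem, is not proved. You explicitly stop at the crux (``the hard part is exactly this last step \dots should finish the finite case''). And the sketch you give does not close on its own: writing $1/H$ as $(1-m^et^e)h(t)$, where $e$ is the order of the scalar subgroup of $\rho(G)$, and demanding that the poles of $H$ on the circle $|t|=1/m$ cancel only yields consistency conditions such as $h(\zeta^{-j}/m)=|G|/e$ at the $e$-th roots of unity $\zeta^{j}$; these force a contradiction only when $\deg h<e$, i.e.\ when there are fewer than $e$ distinct non-scalar traces. In general the contradiction must be extracted from the integrality and positivity of the trace multiplicities $n_\mu$, the fact that the identity is the unique element of trace $m$, and the arithmetic of traces as Galois-stable sums of roots of unity --- precisely the work done in \cite{DiFo}, and absent here. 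Also a small but real slip in your trace-zero shortcut: $1/H$ is a polynomial, and polynomials do not vanish at infinity; what you need is that $H=1/q$ vanishes at infinity when $q$ is nonconstant, and for that you must also rule out $\KXd^G=K$ (nontriviality of invariants of a finite group in characteristic zero needs its own short argument).

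The second genuine gap is scope. The theorem is stated for an arbitrary subgroup $G\leq\GL_d(K)$ over an arbitrary field $K$, whereas everything in your proposal that goes beyond generalities --- the Molien formula, eigenvalues being roots of unity, the bound $|\tr\rho(g)|\leq m$ with equality exactly for scalars, the dominant pole at $t=1/m$ --- requires $G$ finite and $\charr K=0$. For infinite groups, and for any group in positive characteristic, you write ``one follows Koryukin,'' which is a citation, not a proof; moreover Koryukin's argument is structural rather than a Hilbert-series argument, so no part of your machinery extends to it. As it stands, the proposal establishes one implication in characteristic zero and reduces the other to exactly the two results (Dicks--Formanek's theorem and Koryukin's extension) that the statement is meant to encapsulate.
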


\begin{corollary}[{\cite{DiFo, Kh2}}]
For finite $G \leq \GL_d(K)$, the algebra of invariants $\KXd^G$ is finitely generated if and only if $G$ is cyclic and consists of scalar matrices.
\end{corollary}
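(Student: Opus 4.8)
The plan is to derive the corollary from Theorem~\ref{noncommutative finite generation}. Writing $W := KY_m$ for the minimal subspace of $KX_d$ with $\KXd^G\subseteq K\langle Y_m\rangle$, that theorem reduces the question to understanding when a finite $G$ acts on $W$ by scalar multiplication. The two ingredients I must supply are: (i) the elementary group theory that a finite group acting by scalars must be cyclic, and (ii) the representation-theoretic fact that for a \emph{finite} group the minimal subspace is everything, $W=KX_d$, so that ``acts on $W$ by scalars'' upgrades to ``consists of scalar matrices''.

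For the implication ($\Leftarrow$), suppose $G$ is cyclic and every element of $G$ is a scalar matrix. Then $G$ acts by scalar multiplication on every subspace of $KX_d$, in particular on $W$, and Theorem~\ref{noncommutative finite generation} immediately gives that $\KXd^G$ is finitely generated. For the implication ($\Rightarrow$), assume $\KXd^G$ is finitely generated; Theorem~\ref{noncommutative finite generation} provides a character $\chi_0\colon G\to K^{*}$ with $g(w)=\chi_0(g)w$ for all $w\in W$. Once I know $W=KX_d$, this says precisely that each $g\in G$ is the scalar matrix $\chi_0(g)\,\id$; the assignment $g\mapsto \chi_0(g)$ is then injective (the defining representation $G\hookrightarrow\GL(KX_d)$ is faithful, so two elements with the same scalar coincide) and is a homomorphism into the multiplicative group $K^{*}$. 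Since every finite subgroup of the multiplicative group of a field is cyclic, $G$ is cyclic. Thus the whole statement hinges on step (ii).

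To prove $W=KX_d$ I first record that $W$ is a $G$-submodule. The intersection property $K\langle W_1\rangle\cap K\langle W_2\rangle=K\langle W_1\cap W_2\rangle$ for subspaces $W_1,W_2\subseteq KX_d$ (seen by choosing a basis adapted to $W_1\cap W_2\subseteq W_1,W_2$) shows that $W$ is the unique minimal such subspace; applying $g\in G$ to $\KXd^G\subseteq K\langle W\rangle$ and using $g(\KXd^G)=\KXd^G$ gives $\KXd^G\subseteq K\langle g(W)\rangle$, so $g(W)=W$ by minimality. Now suppose, for contradiction, that $W\subsetneq KX_d$. Since $G$ is finite and $\charr K=0$, complete reducibility yields a nonzero $G$-invariant complement $U$, so $KX_d=W\oplus U$ as $G$-modules. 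The degree-$n$ component of $K\langle W\rangle$ is exactly $W^{\otimes n}\subseteq (KX_d)^{\otimes n}$, so $\KXd^G\subseteq K\langle W\rangle$ forces $\big((KX_d)^{\otimes n}\big)^G=\big(W^{\otimes n}\big)^G$ for every $n$. Comparing dimensions via the averaging (Reynolds) operator gives
\[
\frac{1}{|G|}\sum_{g\in G}\chi(g)^{n}=\frac{1}{|G|}\sum_{g\in G}\chi_W(g)^{n}\qquad(n\geq 0),
\]
where $\chi(g)=\tr(g\mid KX_d)$ and $\chi_W(g)=\tr(g\mid W)$. Equality of these power sums for all $n$ forces the multisets $\{\chi(g)\}_{g\in G}$ and $\{\chi_W(g)\}_{g\in G}$ to coincide; but $\chi(e)=\dim KX_d>\dim W=\chi_W(e)$, while $|\chi_W(g)|\le\dim W$ for all $g$, so the value $\dim KX_d$ occurs in the first multiset and not in the second, a contradiction. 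Hence $U=0$ and $W=KX_d$, completing step (ii).

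The main obstacle is step (ii): this is exactly where finiteness of $G$ enters essentially, and where one must rule out an ``invisible'' direction of $KX_d$ that fails to appear in any invariant. The power-sum/character comparison is what forces every direction to be seen by the invariants; intuitively, the dominant contribution $\chi(e)^{n}=(\dim KX_d)^{n}$ to the dimension of the degree-$n$ invariants cannot be accounted for by a proper submodule, whose own count grows only like $(\dim W)^{n}$. Everything else is either a direct appeal to Theorem~\ref{noncommutative finite generation} or the standard fact that finite subgroups of $K^{*}$ are cyclic.
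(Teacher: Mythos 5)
Your proposal is correct in the setting the paper actually works in, but note that the paper offers no proof of this corollary at all: it is stated as an immediate specialization of Theorem~\ref{noncommutative finite generation}, with the substance delegated to the cited works of Dicks--Formanek and Kharchenko. So the comparison here is between your argument and a citation. What your write-up supplies --- and what the corollary genuinely needs beyond Theorem~\ref{noncommutative finite generation} --- is your step (ii): for finite $G$, the minimal subspace $W$ with $\KXd^G \subseteq K\langle W\rangle$ is all of $KX_d$, so that ``$G$ acts on $W$ by scalars'' upgrades to ``$G$ consists of scalar matrices''. Your proof of that step is sound: the intersection property $W_1^{\otimes n}\cap W_2^{\otimes n}=(W_1\cap W_2)^{\otimes n}$ gives uniqueness of the minimal subspace and hence its $G$-invariance; complete reducibility gives a complement; and the Reynolds-operator dimension count $\frac{1}{|G|}\sum_{g}\chi(g)^n = \frac{1}{|G|}\sum_{g}\chi_W(g)^n$ for all $n$, via Newton's identities, forces the two character multisets to agree, which is impossible at $g=e$ since $|\chi_W(g)|\le \dim W < d$ under any embedding of the character values into $\mathbb{C}$. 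The two directions of the equivalence, and the final step that a faithful scalar representation embeds $G$ into the multiplicative group $K^{\ast}$ (whence $G$ is cyclic), are also handled correctly; as you implicitly note, cyclicity is automatic and is not needed for the converse direction.

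One caveat: yours is intrinsically a characteristic-zero proof --- Maschke's theorem, the trace formula for the Reynolds projection, Newton's identities, and the archimedean bound on character values all require it --- whereas the corollary as stated, like Theorem~\ref{noncommutative finite generation}, carries no hypothesis on $K$, and Kharchenko's cited result covers arbitrary characteristic. Since the paper itself works over $K=\mathbb{C}$, this does not affect anything the paper uses, but strictly speaking your argument establishes only the Dicks--Formanek (characteristic-zero) case of the cited statement; when $\charr K = p$ divides $|G|$, both the complete-reducibility step and the dimension count break down, and a different argument is required.
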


\begin{corollary}[{\cite{Kor}}]
	If $G$ acts irreducibly on $KX_d$, then $\KXd^G$ is either trivial or not finitely generated.
\end{corollary}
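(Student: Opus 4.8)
The plan is to derive the corollary directly from Theorem~\ref{noncommutative finite generation}, the dichotomy being governed entirely by the behaviour of the minimal subspace $KY_m \subseteq KX_d$ appearing there. The central observation I would establish first is that $KY_m$ is itself a $G$-invariant subspace of $KX_d$. Once this is in hand, irreducibility of the $G$-action on $KX_d$ forces $KY_m$ to be either $0$ or all of $KX_d$, and a short case analysis then completes the argument.

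To prove that $KY_m$ is $G$-invariant, I would first show that the minimal subspace is genuinely unique, namely that $KY_m = \bigcap U$, where $U$ ranges over all subspaces of $KX_d$ with $\KXd^G \subseteq K\langle U\rangle$. The key lemma here is the intersection formula $K\langle U_1\rangle \cap K\langle U_2\rangle = K\langle U_1\cap U_2\rangle$ for subspaces $U_1, U_2 \subseteq KX_d$. I would prove this by choosing a basis of $KX_d$ that extends a fixed basis of $U_1\cap U_2$ to bases of $U_1$ and of $U_2$ using disjoint sets of new vectors; the associated monomial basis of the free algebra then separates the three subalgebras, so an element lying in both $K\langle U_1\rangle$ and $K\langle U_2\rangle$ can involve only letters from $U_1 \cap U_2$. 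Since $KX_d$ is finite-dimensional, the defining intersection is finite, so iterating the formula shows that the intersection $U_0$ still satisfies $\KXd^G \subseteq K\langle U_0\rangle$; hence $U_0 = KY_m$ is the unique minimal subspace. Finally, each $g \in G$ acts as a grading-preserving automorphism of $\KXd$ that fixes $\KXd^G$, so $\KXd^G = g(\KXd^G) \subseteq K\langle g(KY_m)\rangle$; thus $g(KY_m)$ belongs to the defining family, whence $KY_m \subseteq g(KY_m)$, and equality of dimensions yields $g(KY_m) = KY_m$.

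With $G$-invariance of $KY_m$ established, irreducibility leaves only the two possibilities $KY_m = 0$ and $KY_m = KX_d$. In the first case $\KXd^G \subseteq K\langle \emptyset\rangle = K$, so the invariant algebra is trivial. In the second case Theorem~\ref{noncommutative finite generation} asserts that $\KXd^G$ is finitely generated precisely when $G$ acts on $KX_d$ by scalar multiplication; but for $d \geq 2$ a scalar action leaves every line invariant and is therefore never irreducible, so this possibility is excluded and $\KXd^G$ is not finitely generated.

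I expect the main obstacle to be the $G$-invariance of $KY_m$, and within that the intersection formula for free subalgebras, since it is exactly this formula that guarantees the minimal subspace is well defined rather than merely minimal in dimension. The remaining steps, the reduction to the two cases and the exclusion of the scalar action, are then formal consequences of irreducibility and of the cited theorem.
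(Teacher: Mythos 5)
Your proof is correct and takes exactly the route the paper intends: the paper states this corollary without any proof of its own, as an immediate consequence of Theorem~\ref{noncommutative finite generation}, which is precisely your derivation (irreducibility forces the minimal subspace $KY_m$ to be $0$ or all of $KX_d$, giving triviality in the first case and, since a scalar action is never irreducible for $d\geq 2$, non-finite-generation in the second). Your extra work establishing the uniqueness and $G$-invariance of $KY_m$ via the intersection formula $K\langle U_1\rangle \cap K\langle U_2\rangle = K\langle U_1\cap U_2\rangle$ correctly fills in the detail that the paper leaves implicit.
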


It has been shown that the counterpart of the Chevalley--Shephard--Todd theorem also looks different for the free associative algebra $K\langle X_d\rangle$.

\begin{theorem}\label{free generation of noncommutative invariants}
{\rm (i) (Lane \cite{La}, Kharchenko \cite{Kh1})} For any $G \leq \GL_d(K)$ and any field $K$, the algebra $\KXd^G$ is free.
		
{\rm (ii) (Kharchenko \cite{Kh1})} For finite groups $G$, there exists a Galois correspondence between free subalgebras of $\KXd$ containing $\KXd^G$ and subgroups of $G$:
a subalgebra $F$ with $\KXd^G \subseteq F$ is free if and only if $F = \KXd^H$ for some $H \leq G$.
\end{theorem}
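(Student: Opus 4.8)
The plan is to treat the two parts by different machinery, exploiting throughout that $G \leq \GL_d(K)$ acts by linear substitutions and hence preserves the standard grading of $\KXd$ by total degree. Consequently $R := \KXd^G$ is a connected graded subalgebra, $R = K \oplus R_1 \oplus R_2 \oplus \cdots$ with $R_0 = K$, and the entire question collapses to a statement about graded subalgebras of a free associative algebra.

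For part (i) I would first reduce freeness to a structural criterion: a connected graded subalgebra $R$ of $\KXd$ is free exactly when its augmentation ideal $R_+$ admits a homogeneous complement $Y$ to $R_+^2$ that generates $R$ with no relations (equivalently, when the relation space $\mathrm{Tor}_2$ vanishes). The substantive step is to exhibit such a free generating set $Y$. Here I would invoke Cohn's structure theory presenting $\KXd$ as a free ideal ring, together with the weak algorithm, the aim being to show that the embedding $R \hookrightarrow \KXd$ is honest enough that a minimal homogeneous generating set of $R$ cannot satisfy a nontrivial relation. Concretely, after fixing a monomial order one argues on leading monomials: any relation among minimal generators would force a coincidence of leading terms incompatible with $R$ being $G$-stable. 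I expect this to be the main obstacle, because it must hold for an arbitrary, possibly infinite group over an arbitrary field. The clean averaging argument available for finite or reductive $G$ — the Reynolds operator $f \mapsto |G|^{-1}\sum_{g\in G} g(f)$, which is an $R$-bimodule projection and thus exhibits $R$ as a bimodule retract of $\KXd$ — is simply unavailable in general and must be replaced by Cohn's and Kharchenko's intrinsic criterion for freeness of subalgebras.

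For part (ii) I would set up a Galois correspondence modeled on the classical one, using that $G$ is finite. The backward inclusion is immediate: if $H \leq G$ then $\KXd^G \subseteq \KXd^H$, and $\KXd^H$ is free by part (i). The forward direction — that every free intermediate subalgebra $F$ with $\KXd^G \subseteq F \subseteq \KXd$ coincides with some $\KXd^H$ — is the core. Here I would work inside the skew group algebra $\KXd * G$ and show that $\KXd$ is a Galois extension of $\KXd^G$ in Kharchenko's sense, so that intermediate subalgebras closed under the relevant operations are controlled by subgroups. Given an intermediate free $F$, I would set $H := \{g \in G : g|_F = \id\}$, obtain the easy containment $F \subseteq \KXd^H$, and then prove $\KXd^H \subseteq F$ by a rank comparison: part (i) makes both algebras free, and the finiteness of $G$ lets one compare their Hilbert series via a Molien-type count governed by the index $[G:H]$, whereupon freeness of $F$ forces equality.

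In summary, the grading and the finiteness of $G$ reduce part (ii) to a relatively formal Galois-theoretic bookkeeping once the freeness of invariant subalgebras is in hand, so the step I expect to dominate the work is the intrinsic freeness criterion underpinning part (i) in the full generality of arbitrary groups and arbitrary fields.
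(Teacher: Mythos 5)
First, a point of calibration: the paper gives no proof of this theorem at all. It is stated as background, with the arguments residing in Lane's thesis \cite{La} and Kharchenko's paper \cite{Kh1}, so there is no in-paper proof to compare yours against; what can be judged is whether your outline would constitute a proof, and it would not. In part (i), reducing freeness of the connected graded subalgebra $R=\KXd^G$ to the vanishing of the relation space is a fine first move, but the decisive claim --- that a relation among minimal homogeneous generators ``would force a coincidence of leading terms incompatible with $R$ being $G$-stable'' --- is asserted with no argument behind it, and it is precisely the content of the theorem. Gradedness alone cannot do the job: $K\langle x_1^2,x_1^3\rangle$ is a connected graded subalgebra of $\KXd$ that is not free, so you must say exactly how $G$-stability enters the leading-term combinatorics, and your sketch never does. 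Kharchenko's published argument does not run on a monomial order at all; it isolates an intrinsic closure property of invariant subalgebras (stability under transduction-type operations, i.e.\ stripping off cofactors) and proves that every graded subalgebra with this closure property is free. That lemma is the real work, and nothing in your outline substitutes for it.

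In part (ii) there is a step that fails outright. Having set $H=\{g\in G: g|_F=\id\}$ and obtained the easy inclusion $F\subseteq \KXd^H$, you propose to get the reverse inclusion from ``a Molien-type count governed by the index $[G:H]$, whereupon freeness of $F$ forces equality.'' But the Dicks--Formanek/Molien formula computes Hilbert series of \emph{invariant} algebras only; $F$ is merely assumed to be a free intermediate subalgebra --- which is exactly the hypothesis of the theorem --- so nothing pins down $H(F,t)$, and no series comparison is available. The assertion that freeness of $F$ forces $H(F,t)=H\big(\KXd^H,t\big)$ is not bookkeeping: it is equivalent to the statement being proved, so the argument is circular at its core. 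Two smaller defects point the same way: the formula you invoke requires $\charr K=0$ and $G$ finite, whereas the theorem as stated carries no characteristic restriction, so this route could not reach the stated generality even if the gap were filled; and the claim that $\KXd$ is a ``Galois extension of $\KXd^G$ in Kharchenko's sense'' controlling intermediate subalgebras is again invoked rather than verified --- establishing that control for the free algebra is the substance of Kharchenko's paper, not a formality.
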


The analogue of the Molien formula for the Hilbert series of invariants admits the following elegant form:

\begin{theorem} [\label{noncommutatiev Molien formula}{Dicks and Formanek~{\cite{DiFo}}}]
For a finite group $G \leq \GL_d(K)$ with $\mathrm{char}(K) = 0$,
\[
H\big(K\langle X_d\rangle^G,t\big)=\frac{1}{\vert G\vert}\sum_{g\in G}\frac{1}{1-\tr(g)t}.
\]
\end{theorem}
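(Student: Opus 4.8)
The plan is to follow the classical template for the Molien formula, replacing the symmetric powers that appear in the commutative setting by the full tensor powers appropriate to the free algebra; in fact this substitution makes the computation \emph{easier} than in the commutative case. First I would identify the homogeneous component of degree $n$ of $\KXd$ with the tensor power $(KX_d)^{\otimes n}$ of the defining $d$-dimensional space $KX_d$: a monomial $x_{i_1}\cdots x_{i_n}$ corresponds to the decomposable tensor $x_{i_1}\otimes\cdots\otimes x_{i_n}$, and under this identification the action of $g\in G$ on the degree-$n$ component is exactly $g^{\otimes n}$, the $n$-th tensor power of the linear operator $g$ on $KX_d$. Consequently the $G$-invariant part in degree $n$ is precisely the fixed subspace $\big((KX_d)^{\otimes n}\big)^G$, and
\[
H\big(\KXd^G,t\big)=\sum_{n\ge 0}\dim\big((KX_d)^{\otimes n}\big)^G\,t^n.
\]

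Next I would compute each dimension by the averaging (Reynolds) operator. Since $G$ is finite and $\charr(K)=0$, the operator $P_n=\tfrac{1}{\vert G\vert}\sum_{g\in G}g^{\otimes n}$ is an idempotent projecting $(KX_d)^{\otimes n}$ onto its $G$-invariants, whence $\dim\big((KX_d)^{\otimes n}\big)^G=\tr(P_n)=\tfrac{1}{\vert G\vert}\sum_{g\in G}\tr\big(g^{\otimes n}\big)$. The one genuinely useful fact is the multiplicativity of the trace under tensor products, $\tr(A\otimes B)=\tr(A)\tr(B)$, which immediately yields $\tr\big(g^{\otimes n}\big)=\big(\tr(g)\big)^n$. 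Thus the $n$-th coefficient of the Hilbert series equals $\tfrac{1}{\vert G\vert}\sum_{g\in G}\big(\tr(g)\big)^n$. This is exactly the point where the noncommutative setting simplifies: in the commutative Molien formula one works with $\Sym^n(KX_d)$, whose character is the complete homogeneous symmetric function $h_n$ of the eigenvalues of $g$ and sums to $1/\det(1-gt)$, whereas here the full tensor power gives the far simpler scalar $\big(\tr(g)\big)^n$.

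Finally I would assemble the generating function and sum the resulting geometric series, interchanging the finite sum over $G$ with the summation over $n$:
\[
H\big(\KXd^G,t\big)=\sum_{n\ge 0}\left(\frac{1}{\vert G\vert}\sum_{g\in G}\big(\tr(g)\big)^n\right)t^n=\frac{1}{\vert G\vert}\sum_{g\in G}\sum_{n\ge 0}\big(\tr(g)\,t\big)^n=\frac{1}{\vert G\vert}\sum_{g\in G}\frac{1}{1-\tr(g)t},
\]
where each inner series is interpreted as a formal power series (equivalently converges for $\vert t\vert$ small), since every $\tr(g)$ is a fixed scalar. Honestly there is no serious obstacle here: the argument is essentially a bookkeeping exercise once the degree-$n$ component is recognized as $(KX_d)^{\otimes n}$ with $g$ acting by $g^{\otimes n}$. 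The only conceptual steps requiring care are the validity of the idempotent projection $P_n$ (which is where finiteness of $G$ and $\charr(K)=0$ are used, via Maschke's theorem) and the trace identity $\tr(g^{\otimes n})=(\tr g)^n$; everything else is formal manipulation of the generating function.
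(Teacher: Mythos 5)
Your proposal is correct, and all the key steps hold: the identification of the degree-$n$ homogeneous component with $(KX_d)^{\otimes n}$ carrying the action $g^{\otimes n}$, the averaging idempotent whose trace computes $\dim\big((KX_d)^{\otimes n}\big)^G$ (this is where $\charr(K)=0$ and finiteness of $G$ enter), the identity $\tr\big(g^{\otimes n}\big)=\big(\tr(g)\big)^n$, and the formal geometric series summation. Note that the paper itself gives no proof of this statement --- it is quoted in the Preliminaries as a known theorem of Dicks and Formanek --- so there is nothing internal to compare against; your argument is the standard one (essentially the original proof), and it is precisely the replacement of $\Sym^n(KX_d)$ by the full tensor power that turns the commutative Molien denominator $\det(1-gt)$ into the simpler $1-\tr(g)t$.
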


\section{\textbf{The algebra} $\CuvD$}
The canonical action of the dihedral group
\[
D_{2n}=\langle\rho,\tau\mid\rho^n=\tau^2=(\tau\rho)^2=1\rangle
\]
on the two-dimensional vector space is as the group of symmetries of the regular $n$-gon.
Working over $\mathbb C$ we may change the basis, see Klein \cite{Kl} (Kapitel II in the German original or Chapter II in the English translation) and may assume that
it acts on the vector space with basis $\{u,v\}$ via
\[
\begin{matrix}
	\rho:&u\to\xi u\\
	& v\to\bar\xi v
\end{matrix}
\ \ \ \ \ \ \ \  \begin{matrix}
	\tau:&u\to v\\
	&v\to u
\end{matrix}
\]
where $\xi=e^{\frac{2\pi i}{n}}$ is a primitive $n$-th root of unity and $\bar\xi=\xi^{-1}$.
In the sequel we shall consider this action of $D_{2n}$ on $\mathbb{C}\langle u,v \rangle$.

Considering polynomials $p(u,v) \in \mathbb{C}\langle u,v \rangle^{D_{2n}}$ with $p(0,0) = 0$ (omitting constant terms since they are naturally invariant), we observe:
 \begin{enumerate}
 	\item \textbf{Symmetry Condition:}
 \[
 p(u,v)=\tau\cdot p(u,v)=p(v,u).
 \]
 Thus all invariants are symmetric in $u$ and $v$.
\item \textbf{Grading Condition.}
The $\rho$-invariance $p(u,v) =\rho\cdot p(u,v)= p(\xi u, \bar{\xi} v)$ implies for homogeneous in $u$ and $v$ components
\[
w(u,v)+w(v,u)=\xi^{\text{\rm deg}_uw-\text{\rm deg}_vw}w(u,v)+\xi^{\text{\rm deg}_vw-\text{\rm deg}_uw}w(v,u).
\]
Therefore
 \[
 \text{\rm deg}_uw-\text{\rm deg}_vw\equiv0 \ (\text{\rm mod} \ n).
 \]
 \end{enumerate}

\begin{remark}\label{H(F,t) and g(t)}
Let $F$ be a free subalgebra of the free algebra ${\mathbb C}\langle X_d\rangle$ freely generated by a set $Y$ of homogeneous polynomials and $g_n$ be the number of polynomials of degree $n$ in $Y$.
It is well known that the Hilbert series of $F$ can be expressed by the generation function of the sequence $g_1,g_2,\ldots$, as
\[
H(F,t)=\frac{1}{1-G(F,t)},\text{ where }G(F,t)=\sum_{n\geq 1}g_nt^n.
\]
\end{remark}

It is known by \cite{La,Kh1} that $\CuvD$ is a free algebra. Also by \cite{DiFo}, the Hilbert series of the algebra $\CuvD$ is
\[
H_{2n}(t) = H(\CuvD,t)=\frac{1}{2n}\sum_{g\in \Dn}\frac{1}{1-\text{\rm tr}(g)t}
\]
Using the symmetry and grading conditions on the $D_{2n}$-invariants together with Remark \ref{H(F,t) and g(t)}
we shall compute the Hilbert series for the algebra of invariants $\mathbb{C}\langle u,v \rangle^{D_{2n}}$ and  shall present its homogeneous free generating set.

\begin{theorem}
	The Hilbert series $H_{2n}(t)$ of $\mathbb{C}\langle u,v \rangle^{D_{2n}}$ is given by
	\begin{itemize}
		\item [(i)]for odd  $n=2m+1$, $m\geq1$
		\[
		H_{2n}(t)=\frac{1}{2}+\frac{1}{2n(1-2t)}+\frac{1}{n}\sum_{k=1}^{m}\frac{1}{1-2\text{\rm cos}(\frac{2k\pi}{n})t},
		\]
		
		\item [(ii)] for even $n=2m+2$, $m\geq1$
		\[
		H_{2n}(t)=\frac{1}{2}+\frac{1}{2n(1-2t)}+\frac{1}{2n(1+2t)}+\frac{1}{n}\sum_{k=1}^{m}\frac{1}{1-2\text{\rm cos}(\frac{2k\pi}{n})t}.
		\]
	\end{itemize}
\end{theorem}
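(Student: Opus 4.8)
The plan is to reduce everything to the noncommutative Molien formula $H_{2n}(t)=\frac{1}{2n}\sum_{g\in\Dn}\frac{1}{1-\tr(g)t}$ recorded above, so that the whole computation amounts to listing the matrices of $\Dn$ in the chosen basis, reading off their traces, and folding a finite sum. First I would fix the matrix realization coming from the given action: in the basis $\{u,v\}$ the rotation is $\rho=\operatorname{diag}(\xi,\bar\xi)$, hence $\rho^k=\operatorname{diag}(\xi^k,\bar\xi^k)$, while $\tau=\begin{pmatrix}0&1\\1&0\end{pmatrix}$. The group $\Dn$ then consists of the $n$ rotations $\rho^k$ with $0\le k\le n-1$ and the $n$ reflections $\tau\rho^k$.

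Next I would compute the two families of traces. For the rotations, $\tr(\rho^k)=\xi^k+\bar\xi^k=2\cos(2k\pi/n)$. For the reflections, the direct product $\tau\rho^k=\begin{pmatrix}0&\bar\xi^k\\\xi^k&0\end{pmatrix}$ has $\tr(\tau\rho^k)=0$ for every $k$, consistent with the fact that each element of $\Dn\setminus\langle\rho\rangle$ is a planar reflection with eigenvalues $\pm1$. Substituting into the Molien formula, the $n$ reflection terms each contribute $\frac{1}{1-0\cdot t}=1$, for a total of $\frac{1}{2n}\cdot n=\frac12$, while the rotations contribute $\frac{1}{2n}\sum_{k=0}^{n-1}\frac{1}{1-2\cos(2k\pi/n)t}$. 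Thus $H_{2n}(t)=\frac12+\frac{1}{2n}\sum_{k=0}^{n-1}\frac{1}{1-2\cos(2k\pi/n)t}$, and it remains only to fold this rotation sum.

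The last step uses the symmetry $\cos(2k\pi/n)=\cos(2(n-k)\pi/n)$, so the summand for $k$ equals the summand for $n-k$; pairing $k$ with $n-k$ collapses the sum onto its self-paired indices plus a doubled contribution from the rest. This is where the parity split enters and where I expect the only genuine bookkeeping to lie. For $n=2m+1$ the single self-paired index is $k=0$, giving $\frac{1}{1-2t}$, and $k=1,\dots,m$ pair bijectively with $k=m+1,\dots,n-1$, each counted twice; this produces form (i). For $n=2m+2$ there are two self-paired indices, $k=0$ giving $\frac{1}{1-2t}$ and $k=n/2=m+1$ where $\cos\pi=-1$ giving $\frac{1}{1+2t}$, while $k=1,\dots,m$ double up against $k=m+2,\dots,n-1$; this produces form (ii). The main thing to verify carefully is that $k\mapsto n-k$ is an involution on $\{1,\dots,n-1\}$ whose fixed points are exactly the listed self-paired indices, so that the tallies $1+2m=n$ and $2+2m=n$ account for all rotations with no term omitted or double-counted.
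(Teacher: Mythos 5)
Your proposal is correct and follows essentially the same route as the paper: apply the Dicks--Formanek (noncommutative Molien) formula, observe that the $n$ reflections have trace $0$ and the rotations have trace $2\cos(2k\pi/n)$, then fold the rotation sum via the involution $k\mapsto n-k$ with the parity split determining the self-paired indices. The only cosmetic difference is that you read the traces from the diagonalized complex matrices $\rho^k=\mathrm{diag}(\xi^k,\bar\xi^k)$ while the paper uses the conjugate real rotation/reflection matrices; since traces are conjugation-invariant, the computation is identical.
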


\begin{proof}
	Consider the canonical matrix representation of the dihedral group. The rotations $r_k$ and the reflections $s_k$ are represented by the matrices
	\[
	r_k=\left(\begin{tabular}{  c  c  }
		$\text{\rm cos}(\frac{2k\pi}{n})$& $-\text{\rm sin}(\frac{2k\pi}{n})$ \\
		&\\
		$\text{\rm sin}(\frac{2k\pi}{n})$ &$\text{\rm cos}(\frac{2k\pi}{n})$\\
	\end{tabular}\right) \ \ \ \text{\rm and} \ \ \
	s_k=\left(\begin{tabular}{  c  c  }
		$\text{\rm cos}(\frac{2k\pi}{n})$& $\text{\rm sin}(\frac{2k\pi}{n})$ \\
		&\\
		$\text{\rm sin}(\frac{2k\pi}{n})$ & $-\text{\rm cos}(\frac{2k\pi}{n})$\\
	\end{tabular}\right).
	\]
Their traces are:
\[
\text{\rm tr}(r_k)=2\text{\rm cos}\left(\frac{2k\pi}{n}\right), \ \   \text{\rm tr}(s_k)=0, \text{ for } k=0,\ldots,n-1.
\]

Hence the Hilbert series of the algebra $\CuvD$ is
\begin{align}
H_{2n}(t)=&\frac{1}{2n}\sum_{g\in D_{2n}}\frac{1}{1-\text{\rm tr}(g)t}\nonumber
		=\frac{1}{2n}\sum_{k=0}^{n-1}\frac{1}{1-\text{\rm tr}(s_k)t}+\frac{1}{2n}\sum_{k=0}^{n-1}\frac{1}{1-\text{\rm tr}(r_k)t}\nonumber\\
		=&\frac{1}{2n}\sum_{k=0}^{n-1}{1}+\frac{1}{2n}\sum_{k=0}^{n-1}\frac{1}{1-2\text{\rm cos}\left(\frac{2k\pi}{n}\right)t}\nonumber\\
		=&\frac{1}{2}+\frac{1}{2n(1-2t)}+\frac{1}{2n}\sum_{k=1}^{n-1}\frac{1}{1-2\text{\rm cos}\left(\frac{2k\pi}{n}\right)t}\nonumber
	\end{align}
	
	(i) Now let $n=2m+1$ for some integer $m\geq1$. Since for $k=1,\ldots,m$
	\[
	\text{\rm cos}\left(\frac{2k\pi}{2m+1}\right)=\text{\rm cos}\left(-\frac{2k\pi}{2m+1}+2\pi\right)=\text{\rm cos}\left(\frac{2(2m+1-k)\pi}{2m+1}\right),
	\]
	we have
\begin{align*}
	H_{2n}(t)=& \frac{1}{2} + \frac{1}{2n(1-2t)}  +\frac{1}{n}\sum_{k=1}^{m}\frac{1}{1-2\cos(\frac{2k\pi}{n})t}	\\
	= & \frac{1}{2} + \frac{1}{2n(1-2t)} -  \frac{1}{n(1-2t)} +  \frac{1}{n(1-2t)} + \frac{1}{n}\sum_{k=1}^{m}\frac{1}{1-2\cos(\frac{2k\pi}{n})t}\\
	= &  \frac{1}{2} - \frac{1}{2n(1-2t)}  + \frac{1}{n}\sum_{k=0}^{m}\frac{1}{1-2\cos(\frac{2k\pi}{n})t}
\end{align*}	
			
	(ii) The proof runs similarly to the proof of (i).
\end{proof}

\subsection{Hilbert series }

Let $n=2m+1$. In the Hilbert series
\begin{align*}
		H_{2n}(t) =\frac{1}{2}+\frac{1}{2n(1-2t)}+\frac{1}{n}\sum_{k=1}^{m}\frac{1}{1-2\text{\rm cos}\left(\frac{2k\pi}{n}\right)t}.
	\end{align*}
we analyze the sum involving cosines
\[
A(t)=\sum_{k=0}^{m}\frac{1}{1-2\cos(\frac{2k\pi}{n})t}
\]
and express it as a rational function. By finding a common denominator, we obtain
\begin{align*}
A(t)= \frac{B(t)}{C(t)},\ \ \text{ where } C(t)=\prod_{k=0}^{m} {\big(1-2\cos(\frac{2k\pi}{n})t \big)} 	
\end{align*}

The denominator $C(t)$ relates to the minimal polynomials of $2\cos(2\pi/n)$. Let
\[
\Psi_n(t)= \prod_{k=0}^{m} {\big(t-2\cos(\frac{2k\pi}{n}) \big)} =\prod_{d|n} \psi_d(t),
\]
 where $\psi_d(t)$ is the minimal polynomial of $2\cos(2\pi/d)$. This gives
\[
C(t)=t^{m+1}\Psi_n(\frac{1}{t}).
\]
The minimal polynomial $\psi_n(t)$ has roots
\[
2\cos(2\pi k/n) \mid 1 \leq k < n/2, \gcd(k,n)=1,
\]
i.e. they are  twice the real parts of the primitive $n$-th roots of unity, and degree $\varphi(n)/2$ for $n \geq 3$ (where $\varphi$ is Euler's totient function).

The relation between $\psi_{n}(x) $ and cyclotomic polynomials $\Phi_{n} $ can be shown in the following identity proved by Lehmer \cite{Leh}, which holds for any non-zero complex number $z$:
\[
\psi_{n}\left(z+z^{-1}\right)=z^{-{\frac {\varphi (n)}{2}}}\Phi _{n}(z).
\]

In 1993, Watkins and Zeitlin \cite{WZ} found the following relation between $\psi_{n}(t)$ and Chebyshev polynomials of the first kind.
\begin{theorem} [ \cite{WZ}]
	For the Chebyshev polynomial $T_k(t)$ of the first kind:
	\begin{enumerate}
		\item If $n = 2m + 1$ is odd, then $ \prod _{d\mid n}\psi _{d}(2t)=2(T_{m+1}(t)-T_{m}(t))$;
		
		\item if $n = 2 m$  is even, then $\prod _{d\mid n}\psi _{d}(2t)=2(T_{m+1}(t)-T_{m-1}(t))$;
		
		\item 	if $n$ is a power of 2, i.e. $n=2^k$ we have  $\psi _{2^{k+1}}(2t)=2T_{2^{k-1}}(t)$.
	\end{enumerate}
	
\end{theorem}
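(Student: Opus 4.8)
The plan is to evaluate everything at $t=\cos\theta$ and reduce both sides to Laurent polynomials in $z=e^{i\theta}$, using $2t=2\cos\theta=z+z^{-1}$ so that $\psi_d(2t)=\psi_d(z+z^{-1})$. By Lehmer's identity this equals $z^{-\varphi(d)/2}\Phi_d(z)$ for every $d\geq 3$, while on the target side $z^{k}+z^{-k}=2\cos(k\theta)=2T_k(\cos\theta)=2T_k(t)$; thus the three right-hand sides are exactly the symmetric Laurent polynomials I aim to recover.

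First I would record the two exceptional factors, for which Lehmer's formula does not apply since $\deg\psi_1=\deg\psi_2=1\neq\varphi/2$. A direct computation gives $\psi_1(z+z^{-1})=(z+z^{-1})-2=z^{-1}(z-1)^2$ and $\psi_2(z+z^{-1})=(z+z^{-1})+2=z^{-1}(z+1)^2$. These must be inserted by hand, and tracking which of them occurs is precisely where the parity of $n$ enters: for odd $n$ only $\psi_1$ divides the product, whereas for even $n$ both $\psi_1$ and $\psi_2$ do.

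The heart of the argument is then a single collapsing step. Pulling out the exceptional factor(s), applying Lehmer to the remaining ones, and invoking the classical identities $\sum_{d\mid n}\varphi(d)=n$ and $\prod_{d\mid n}\Phi_d(z)=z^n-1$, the odd case $n=2m+1$ yields
\[
\prod_{d\mid n}\psi_d(2t)=z^{-1}(z-1)^2\cdot z^{-(n-1)/2}\frac{z^n-1}{z-1}=z^{-(n+1)/2}(z-1)(z^n-1),
\]
and expanding this Laurent polynomial gives $\big(z^{(n+1)/2}+z^{-(n+1)/2}\big)-\big(z^{(n-1)/2}+z^{-(n-1)/2}\big)=2T_{m+1}(t)-2T_m(t)$, which is part (1). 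The even case $n=2m$ is the same in spirit but removes both $\Phi_1$ and $\Phi_2$ and both degenerate factors, producing $z^{-(n+2)/2}(z^2-1)(z^n-1)$ and hence $2T_{m+1}(t)-2T_{m-1}(t)$, giving part (2). For part (3) no product or exceptional factor intervenes: since $2^{k+1}\geq 4>2$, Lehmer applies directly, and with $\Phi_{2^{k+1}}(z)=z^{2^k}+1$ and $\varphi(2^{k+1})=2^k$ I obtain $\psi_{2^{k+1}}(2t)=z^{-2^{k-1}}(z^{2^k}+1)=z^{2^{k-1}}+z^{-2^{k-1}}=2T_{2^{k-1}}(t)$.

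The only genuine obstacle is the bookkeeping of the degenerate factors $\psi_1,\psi_2$ and the exponents they contribute to the power of $z$ out front; once those are pinned down correctly in each parity class, every case reduces to a short, mechanical manipulation of symmetric Laurent polynomials.
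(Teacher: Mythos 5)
The paper offers no proof of this theorem: it is quoted as a known result of Watkins and Zeitlin \cite{WZ}, stated immediately after Lehmer's identity $\psi_n(z+z^{-1})=z^{-\varphi(n)/2}\Phi_n(z)$. Your argument is therefore not comparable to any internal proof, but it is a correct, self-contained derivation, and it connects exactly the two ingredients the paper places side by side without linking them: Lehmer's identity plus the classical facts $\prod_{d\mid n}\Phi_d(z)=z^n-1$ and $\sum_{d\mid n}\varphi(d)=n$. The step that makes the proof work --- and that a careless version would get wrong --- is your separate treatment of $\psi_1$ and $\psi_2$: Lehmer's identity fails for $d=1,2$ (where $\varphi(d)/2=1/2$), and your direct computations $\psi_1(z+z^{-1})=z^{-1}(z-1)^2$ and $\psi_2(z+z^{-1})=z^{-1}(z+1)^2$ are precisely what produce the correct prefactors in the collapsed products $z^{-(n+1)/2}(z-1)(z^n-1)$ for odd $n$ and $z^{-(n+2)/2}(z^2-1)(z^n-1)$ for even $n$; I verified that both expansions yield the stated Chebyshev differences $2(T_{m+1}-T_m)$ and $2(T_{m+1}-T_{m-1})$, and that part (3) follows from $\Phi_{2^{k+1}}(z)=z^{2^k}+1$ with $\varphi(2^{k+1})=2^k$ as you say. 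Two small points would make the write-up airtight: since you substitute $t=\cos\theta$, add the remark that two polynomials agreeing on all of $[-1,1]$ must be identical (or, cleaner, run the entire computation with $2t=z+z^{-1}$ for arbitrary $z\neq 0$, where Lehmer's identity and the polynomial identity $2T_k\big(\tfrac{z+z^{-1}}{2}\big)=z^k+z^{-k}$ both hold, so the conclusion is a polynomial identity outright); and part (3) tacitly requires $k\geq 1$, which is a defect of the statement as quoted rather than of your argument.
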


From the preceding theorem, we obtain the factorization
\[
\chi_n(t)= \prod_{k=0}^{m} {\big(t-2\cos(\frac{2k\pi}{n}) \big)} = \prod _{d\mid n}\Psi _{d}(t)
\]

The numerator $B(t)$ of $A(t)$ admits an explicit expansion in terms of elementary symmetric polynomials
\[
B(t)=  \sum_{s=0}^{m} \prod_{\substack{k=0,\\ k\neq s}}^{m} {\left(1-2\cos\left(\frac{2k\pi}{n}\right)t \right)}.
\]

This can be systematically expanded as:
\begin{align*}
	B(t) &= 1 - t\left(\sum_{s=0}^{m}\sum_{\substack{k=0 \\ k\neq s}}^{m} 2\cos\left(\frac{2k\pi}{n}\right)\right) \\
   	   	&\quad +t^2\left( \sum_{\substack{ \text{subsets } (\beta_1,\dots \beta_m) \text{ of }  m \\ \text{ among } (m+1) \text{ variables}\
   \ \{2\cos(\frac{0\pi}{n}), \cdots, 2\cos(\frac{2m\pi}{n})\}} } \sigma_2^{m}\big(\beta_1,\dots \beta_m \big) \right)\\
		&\quad - \cdots \\
		&\quad + (-1)^m t^m \sigma_m\left(2\cos\left(\frac{0\pi}{n}\right), \ldots, 2\cos\left(\frac{2m\pi}{n}\right)\right)
\end{align*}
where $\sigma_k$ denotes the $k$-th elementary symmetric polynomial in all $(m+1)$ variables,
namely $\{2\cos(\frac{0\pi}{n}), \cdots, 2\cos(\frac{2m\pi}{n})\}$,
and $\sigma_k^m$ denotes the symmetric polynomial in subsets of $m$ variables among all. The coefficients follow the pattern
\[
B(t) = (m+1) -  m\sigma_1 t + (m-1)\sigma_2 t^2 -\cdots +(-1)^{m-1}  2\sigma_{m-1} t^{m-1}+(-1)^m\sigma_m t^m.
\]

For the even case $n = 2m$, the Hilbert series takes the form:
\begin{align*}
	H_{4m+4}(t) &= \frac{1}{2} + \frac{1}{4m+4}\left(\frac{1}{1-2t} + \frac{1}{1+2t}\right) + \frac{1}{2m+2}\sum_{k=1}^{m}\frac{1}{1-2\cos\left(\frac{2k\pi}{n}\right)t} \\
	&= \frac{1}{2} - \frac{1}{4m+4}\left(\frac{1}{1-2t} + \frac{1}{1+2t}\right) + \frac{1}{2m+2}A(t),
\end{align*}
where $A(t) = \frac{B(t)}{C(t)}$ as before. The numerator  maintains the same symmetric structure
\[
B(t)= (m+1) -  m\sigma_1 t +\cdots +(-1)^{m-1}  2\sigma_{m-1} t^{m-1}+(-1)^m \sigma_m t^m,
\]
where $\sigma_k$ are the $k$-th elementary symmetric polynomial in all $(m+1)$ variables  $\{2\cos\left(\frac{2k\pi}{n}\right)\mid k = 0,\ldots,m\}$.

\subsection{Linear Recurrence Relations and Rational Generating Functions}
A fundamental result in combinatorics and number theory establishes the correspondence between linear recurrence sequences and rational generating functions.
Let us consider a sequence of complex numbers $(R_n)_{n=0}^\infty$ satisfying a $k$-th order linear recurrence relation:
\[
R_n = \sum_{j=1}^{k} A_j R_{n-j} \text{ for } n\geq k,
\]
where $A_k \neq 0$, and the initial values $R_0, \ldots, R_{k-1}$ are given complex numbers (not all zero). The associated characteristic polynomial is
\[
c(x) = x^k - \sum_{j=1}^{k}A_j x^{k-j}.
\]
The generating function of the sequence is defined as the formal power series
\[
R(x) = \sum_{n=0}^{\infty}R_nx^n.
\]
The following theorem establishes the fundamental connection between these concepts
\begin{theorem}[{\cite{JoKiss}}]\label{recseq}
	Let $(a_n)_{n=0}^\infty$ be a sequence satisfying a linear recurrence relation with characteristic polynomial $x^k - \sum{j=1}^k A_j x^{k-j}$.
Then its generating function is rational and takes the form
\[	
a(x) =\frac{\sum_{i=0}^{k-1}b_ix^i}{1 - \sum_{j=1}^{k}A_j x^j},
\]
where the coefficients $b_i$ are determined by:	
\[	
b_i = a_i - \sum_{j=1}^{i}	A_j a_{i-j} \ \text{ for }	\ 0\leq i \leq k - 1 ).
\]

Conversely, any rational function
\[	
\frac{\sum_{i=0}^{s}b_ix^i}{\sum_{j=0}^{k}d_jx^j} \quad (s\geq0, k\geq 1,b_s\neq 0, d_0 \neq 0, d_k\neq 0),
\]
with denominator not dividing the numerator, generates a sequence satisfying a linear recurrence relation with characteristic polynomial
\[	
x^k +\sum_{j=1}^{k}\frac{d_j}{d_0}x^{k-j}
\]
for all $n \geq \max(k, s+1)$. When $s < k$, $d_0 = 1$, and all coefficients are rational integers, the sequence terms $a_n$ are rational integers.
\end{theorem}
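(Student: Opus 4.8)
The plan is to prove both directions by the standard method of manipulating formal power series and comparing coefficients, viewing the recurrence together with its initial data and the rational closed form as two encodings of the same sequence.

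First I would establish the forward direction. Starting from the recurrence $a_n = \sum_{j=1}^k A_j a_{n-j}$, valid for $n \geq k$, I would form the product $\bigl(1 - \sum_{j=1}^k A_j x^j\bigr)a(x)$ and read off its coefficient of $x^n$. For $n \geq k$ this coefficient equals $a_n - \sum_{j=1}^k A_j a_{n-j}$, which vanishes precisely by the recurrence, so all terms of degree at least $k$ collapse and the product is a polynomial of degree at most $k-1$. For $0 \leq n \leq k-1$ the same computation yields $a_n - \sum_{j=1}^n A_j a_{n-j}$, since the terms with $j > n$ drop out as $a_{n-j}=0$; this is exactly the prescribed value of $b_n$. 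Dividing through by the invertible power series $1 - \sum_{j=1}^k A_j x^j$ then gives the claimed rational form.

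For the converse I would reverse this argument. Because $d_0 \neq 0$, the denominator $\sum_{j=0}^k d_j x^j$ is a unit in $\C[[x]]$, so the quotient is a genuine power series $a(x) = \sum_{n\geq 0} a_n x^n$. Clearing the denominator gives $\bigl(\sum_{j=0}^k d_j x^j\bigr)a(x) = \sum_{i=0}^s b_i x^i$, and I would compare coefficients of $x^n$ on both sides. For $n > s$ the right-hand side contributes nothing, and when in addition $n \geq k$ every index $n-j$ with $0 \leq j \leq k$ is nonnegative, so the left-hand coefficient is the full sum $\sum_{j=0}^k d_j a_{n-j} = 0$. Isolating $a_n$ (legitimate since $d_0 \neq 0$) produces $a_n = -\sum_{j=1}^k (d_j/d_0)a_{n-j}$ for all $n \geq \max(k,s+1)$, the recurrence whose characteristic polynomial is $x^k + \sum_{j=1}^k (d_j/d_0)x^{k-j}$. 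The hypothesis that the denominator does not divide the numerator serves to guarantee that $a(x)$ is not a polynomial, so that the recurrence is genuine rather than merely describing an eventually-zero sequence.

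The integrality claim would follow by a short induction. When $s < k$, $d_0 = 1$, and all $b_i, d_j \in \Z$, comparing the coefficients of $x^0, \ldots, x^{k-1}$ and solving successively for $a_0, \ldots, a_{k-1}$ divides only by $d_0 = 1$ at each step, so the initial segment is integral; the recurrence $a_n = -\sum_{j=1}^k d_j a_{n-j}$ then has integer coefficients and propagates integrality to every $n$. I expect the only delicate point to be the index bookkeeping around the boundary $n = k$ and the subrange $n \leq s$, where one must track precisely when the numerator still contributes and when the sum over $j$ is truncated; the remaining algebra is routine.
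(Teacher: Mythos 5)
Your proof is correct, and all three pieces (coefficient comparison for the forward direction, clearing the denominator for the converse, induction on $n$ for integrality) are the standard argument for this classical fact. Note that the paper itself contains no proof of this statement — it is quoted directly from Jones and Kiss \cite{JoKiss} and used as a black box to produce the recurrence tables — so your write-up supplies the routine verification that the paper delegates to its reference, in essentially the form one would find there.
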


Using Theorem \ref{recseq} we summarize the results for the Hilbert series $H_{2n}(t)$ for small $n$ in the table:
\begin{table}[h!]
	\centering
	\begin{tabular}{|c|c|c|}
		\hline
		$n$ & $H_{2n}(t)$ & \textbf{Recurrence relation} \\
		\hline \hline
		$3$ & $\frac{t^2+t-1}{2t^2+t-1}$ & $a_{m+2} = a_{m+1} + 2a_m$ \\ [0.2cm]
		\hline
		$4$ & $\frac{1-3t^2}{1-4t^2}$  & $a_{m+2} = 4a_m$ \\ [0.2cm]
		\hline
		5 &  $\frac{t^3-2t^2-t+1}{2t^3-3t^2-t+1}$& $a_{m+3} = 3a_{m+2} - 3a_{m+1} + 2a_m$ \\ [0.2cm]
		\hline
		6 & $\frac{ 2t^4-4t^2+1}{4t^4-5t^2+1}$ & $a_{m+4} = 5a_{m+2}- 4a_m$ \\ [0.2cm]
		\hline
		7 &  $\frac{t^4+2t^3-3t^2-t+1}{2t^4+3t^3-4t^2-t+1}$ & $a_{m+4} = a_{m+3} + 4a_{m+2} - 3a_{m+1} - 2a_m$ \\ [0.2cm]
		\hline
		8 & $\frac{5t^4-5t^2+1}{8t^4-6t^2+1}$  & $a_{m+4} = 6a_{m+2} - 8a_m$\\ [0.2cm]
		\hline
		9 & $\frac{t^5-3t^4-3t^3+4t^2+t-1}{2t^5-5t^4-4t^3+5t^2+t-1}$   & {\small $a_{m+5} = a_{m+4} + 5a_{m+3} - 4a_{m+2} -5a_{m+1} + 2a_m$} \\ [0.2cm]
		\hline 	
		10 & $\frac{2t^6-9t^4+6t^2-1}{4t^6-13t^4+7t^2-1}$  & $a_{m+6} = 7a_{m+4} - 13a_{m+2} + 4a_m$ \\ [0.2cm]
		\hline
	\end{tabular}
	\caption{Hilbert series for small $n$ and recurrence relation}
\end{table}

Using Remark \ref{H(F,t) and g(t)} the knowledge of the Hilbert series $H_{2n}(t)$ allows to find the generating function $G_{2n}(t)$
for the free generators of the algebra ${\mathbb C}\langle u,v\rangle^{D_{2n}}$. The results for small $n$ are given in the table:
\begin{table}[h!]
	\centering
	\begin{tabular}{|c|c|c|}
		\hline
		$n$ & $G_{2n}(t)$ & \textbf{Recurrence relation} \\
		\hline \hline
		$3$ & $\frac{-t^2}{t^2+t-1}$ & $a_{m+2} = a_{m+1} + a_m$ \\ [0.2cm]
		\hline
		$4$ & $\frac{t^2}{1-3t^2}$  & $a_{m+2} = 3a_m$ \\ [0.2cm]
		\hline
		5 &  $\frac{-t^3+t^2}{t^3-2t^2-t+1}$& $a_{m+3} = a_{m+2} +2a_{m+1} -a_m$ \\ [0.2cm]
		\hline
		6 & $\frac{ -2t^4+t^2}{2t^4-4t^2+1}$ & $a_{m+4} = 4a_{m+2}- 2a_m$ \\ [0.2cm]
		\hline
		7 &  $\frac{-t^4-t^3+t^2}{t^4+2t^3-3t^2-t+1}$ & $a_{m+4} = a_{m+3} + 3a_{m+2} - 2a_{m+1} - a_m$ \\ [0.2cm]
		\hline
		8 & $\frac{-3t^4+t^2}{5t^4-5t^2+1}$  & $a_{m+4} = 5a_{m+2} - 5a_m$\\ [0.2cm]
		\hline
		9 & $\frac{-t^5+2t^4+t^3-t^2}{t^5-3t^4-3t^3+4t^2+t-1}$   & {\small $a_{m+5} = a_{m+4} + 4a_{m+3} - 3a_{m+2} -3a_{m+1} + a_m$} \\ [0.2cm]
		\hline 	
		10 & $\frac{-2t^6+4t^4-t^2}{2t^6-9t^4+6t^2-1}$  & $a_{m+6} = 6a_{m+4} - 9a_{m+2} + 2a_m$ \\ [0.2cm]
		\hline
	\end{tabular}
	\caption{Generation functions for small $n$ and recurrence relation}
\end{table}

\subsection{Set of generators for ${\mathbb C}\langle u,v\rangle^{D_6}$}

We consider the case for $n=3$.
We denote by $w_k$ the leading term of a homogeneous element $w\in{\mathbb C}\langle u,v\rangle^{D_6}$ of degree $k$.
The generating function
\[
G_6(t)=G({\mathbb C}\langle u,v\rangle^{D_6},t)=\sum_{k\geq 2}g_kt^k
\]
for the homogeneous set of free generators of ${\mathbb C}\langle u,v\rangle^{D_6}$ is
is given by
\[
G_6(t) = \frac{-t^2}{t^2+t-1} = t^2 + t^3 + 2t^4 + 3t^5 + 5t^6 + 8t^7 + 13t^8 + 21t^9 + 34t^{10} + 55t^{11} + \dots 	
\]
and corresponding relation is
\[
a_{m+2} = a_{m+1} + a_m=2a_{m}+a_{m-1}, \ \text{ with } a_0=a_1=0.
\]
In other words, the number of free generators of degree $k+2$ is equal to the $k$-th Fibonacci number, $k=0,1,2,\ldots$.
Applying the recurrence formula, we determine the number of polynomials of degree $k$, given by $a_{k+2}=2a_{k}+a_{k-1}$. The leading terms of the generating set are constructed as follows:
\[
\{ \ (uv)w_{k}, \ w_{k}(uv),\ (u^3 )w_{k-1}\  \}.
\]
It follows that the number of such terms is the sum of the twice number of $w_{k}$ and  the number of $w_{k-1}$.

\begin{table}[h!]
	\label{T4}
	\centering
	\begin{tabular}{|c|c|c|}
		\hline
		$d$ & no & set of LT of generators of degree $d$ \\
		\hline \hline
		1 & 0 & -\\ [0.2cm]
		\hline
		2 & 1  & \{ $uv$ \} \\ [0.2cm]
		\hline
		3 &  1 & \{ $u^3$ \} \\ [0.2cm]
		\hline
		4 & 2 & \{ $u^2v^2, \ uv^2u\ $ \} \\ [0.2cm]
		\hline
		5 & 3  &\{ $ u^4v,   u^2vu^2,  uv^4\ $ \}\\ [0.2cm]
		\hline
		6 & 5  & \{ $ (uv)w_4, w_4(uv), (u^3)w_3$  \}\\ [0.2cm]
		\hline 	
			& & $\dots$ \\ [0.2cm]
		\hline
		$k $  &  &\{ $ (uv)w_{k}, \ w_{k}(uv),\ (u^3 )w_{k-1}  $ \} \\ [0.2cm]
		\hline 	
	\end{tabular}
\caption{Set of leading terms of generators of degree $d$}
\end{table}

The Hilbert series of ${\mathbb C}\langle u,v\rangle^{D_6}$ is given by
\[
H_6(t)=\frac{t^2+t-1}{2t^2+t-1} = 1 + t^2 + t^3 + 3t^4 + 5t^5 + 11t^6 + 21t^7 + 43t^8 + 85t^9 + \dots
\]
and the coefficients satisfy the recurrence relation
\[
a_{m+2} = a_{m+1} + 2a_m = a_m+2a_{m-1} + 2a_m = 3a_m + 2a_{m-1}, \ \text{ with } a_0=1, a_1=0.
\]
The algebra ${\mathbb C}\langle u,v\rangle^{D_6}$ has a vector space basis consisting of monomials in its free generators.
The basis element of degree 2 is $uv+vu$ and its leading term is $uv$.
The basis element $u^3+v^3$ of degree 3 gives the leading term $u^3$. For degree 4, the leading terms are $\{ u^2v^2, uvuv, uv^2u \}$.
Applying the recurrence formula, we determine the number of polynomials of degree $k+2$, given by $a_{k+2}=3a_{k}+2a_{k-1}$.
The leading terms of the elements of the basis are constructed as follows:
\[
\{ \  (uv)w_{k}, w_{k}(uv), w_{k}(vu), (u^3 )w_{k-1}, w_{k-1}(v^3)  \  \}.
\]
From this construction, it follows that the number of such terms is the sum of three times the number of $w_{k}$ and twice the number of $w_{k-1}$.

\begin{table}[h!]
	\centering
	\begin{tabular}{|c|c|c|}
		\hline
		degree $d$ & number & set of LT of basis elements of degree $d$   \\
		\hline \hline
		0 & 0 & -\\ [0.2cm]
		\hline
		1 & 0 & -\\ [0.2cm]
		\hline
		2 & 1  & \{ $uv$ \} \\ [0.2cm]
		\hline
		3 &  1 & \{ $u^3$ \} \\ [0.2cm]
		\hline
		4 & 3 & \{ $u^2v^2, \ uvuv, \ uv^2u\ $ \} \\ [0.2cm]
		\hline
		5 & 5  &\{ $    u^4v, u^3vu, u^2vu^2, uvu^3, uv^4 $ \}\\ [0.2cm]
		\hline
		6 & 11  &\{ $ (uv)w_4, \ w_4(uv), \ w_4(vu), \  (u^3 )w_3, \ w_3(v^3) $ \}\\ [0.2cm]
		\hline
		& & $\dots$ \\ [0.2cm]
		\hline
		$k+2 $ &    & \{ $ {\tiny   (uv)w_{k }, \ w_{k }(uv), \ w_{k}(vu), \ (u^3 )w_{k-1}, \ w_{k-1}(v^3) } $ \}\\ [0.2cm]
		\hline
	\end{tabular}
	\caption{Set of leading terms of elements of degree $d$ in the basis}
\end{table}

\section{\textbf{The $S$-algebra} $(\CuvD,\circ)$}

Let ${\mathbb C}\langle X_d\rangle^{(k)}$ denote the homogeneous subspace of total degree $k$ in the free associative algebra ${\mathbb C}\langle X_d\rangle$.
The symmetric group $\Sym(d)$ acts on monomials in ${\mathbb C}\langle X_d\rangle^{(k)}$ via index permutation
\[
\pi(x_{i_1}\cdots x_{i_k})=x_{\pi(i_1)}\cdots x_{\pi(i_k)} \ , \ \ \pi\in \Sym(d).
\]
Koryukin \cite{Kor} introduced a distinct $\Sym(k)$-action that permutes variable positions
\[
(x_{i_1}\cdots x_{i_k})\circ \pi=x_{i_{\pi^{-1}(1)}}\cdots x_{i_{\pi^{-1}(k)}}.
\]
This action differs fundamentally from the standard action, as it rearranges positions rather than exchanging generators. For example:
\begin{align*}
	(x_1x_3x_6)\circ (13)&=x_6x_3x_1,\\
	(13)(x_1x_3x_6)&=x_3x_1x_6.
\end{align*}

The algebra $({\mathbb C}\langle X_d\rangle,\circ)$ is equipped with this additional $\Sym(k)$-action on each homogeneous component. A graded subalgebra $F$ is called an $S$-algebra, denoted $(F,\circ)$, if it satisfies $F^{(k)} \circ \Sym(k) = F^{(k)}$ for all $k$.

An $S$-algebra $(F,\circ)$ is finitely generated if there exists a finite set $U$ of homogeneous polynomials that generates $F$ under the $S$-algebra operations. Koryukin's results \cite{Kor} establish that $(\mathbb C\langle X_d\rangle^{D_{2n}},\circ)$ is finitely generated.

We define the following important invariant elements
\begin{align*}
	s_a       & = u^a v^a+v^a u^a, \quad a=1\dots n,\\
	p_{kn}    & =u^{kn}+v^{kn}, \quad k\in \N,\\
	p_{(a,b)} & =u^{a}v^{b}+v^{b}u^{a}.
\end{align*}

\begin{theorem}
	The $S$-algebra $\CuvD$ is generated (as an $S$-algebra) by $uv+vu$ and $u^n+v^n$.
\end{theorem}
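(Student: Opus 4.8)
Write $A$ for the $S$-subalgebra of $I:=\CuvD$ generated by $s_1:=uv+vu$ and $p_n:=u^n+v^n$; the plan is to prove $A=I$, the inclusion $A\subseteq I$ being clear. The first move is to reduce everything to membership of the explicit elements $p_{(a,b)}=u^av^b+\tau(u^av^b)$. By the symmetry and grading conditions recorded above, every homogeneous invariant is a linear combination of symmetrised monomials $m+\tau(m)$ with $\deg_u m\equiv\deg_v m\pmod n$, where $\tau$ is the letter swap $u\leftrightarrow v$. Calling $(a,b)=(\deg_u m,\deg_v m)$ the \emph{content} of $m$ and choosing $\pi\in\Sym(\deg m)$ with $u^av^b\circ\pi=m$ (possible, since the $\circ$-orbit of a monomial is exactly the set of all monomials of the same content), the fact that $\circ$ commutes with $\tau$ gives $p_{(a,b)}\circ\pi=m+\tau(m)$. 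Hence $I=\operatorname{span}\{p_{(a,b)}\circ\pi\}$, and because $A$ is $\circ$-closed it suffices to prove $p_{(a,b)}\in A$ for all admissible $(a,b)$. Reversal of positions sends $p_{(a,b)}$ to $p_{(b,a)}$, so I may assume $a\ge b$ and set $d:=(a-b)/n$.

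The engine is a double induction, on the degree $k=a+b$ and, for fixed $k$, on the imbalance $d$; the degree induction makes every invariant of smaller degree available in $A$. For $d\ge 2$ I would expand
\[
(u^n+v^n)\big(u^{a-n}v^b+v^{a-n}u^b\big)=p_{(a,b)}+\big(u^nv^{a-n}u^b+v^nu^{a-n}v^b\big).
\]
Here the left side lies in $A$, since $u^n+v^n\in A$ and the second factor is $p_{(a-n,b)}$, which is in $A$ by the degree induction (its degree is $k-n$). The bracketed error is $\tau$-invariant and equals $p_{(n+b,a-n)}\circ\sigma$ for a suitable $\sigma$; its imbalance is $(d-2)n<dn$, so (after a reversal if necessary) it belongs to $A$ by the imbalance induction. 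Thus $p_{(a,b)}\in A$, and the problem collapses to the two base cases $d=0$ and $d=1$.

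In the base cases the decisive feature is that a single product is supported on exactly the target content: $s_1^a=(uv+vu)^a$ involves only monomials of content $(a,a)$, while $s_1^b(u^n+v^n)$ involves only monomials of contents $(a,b)$ and $(b,a)$. Both products lie in $A$, hence so do their full $\Sym(k)$-orbits, and $p_{(a,b)}$ would follow at once if I could show that these orbits linearly span the entire $\tau$-invariant subspace of the span of the monomials of the relevant content(s).

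This spanning assertion is where I expect the genuine difficulty to lie; the remainder is bookkeeping. Precisely, one must show that the cyclic $\Sym(k)$-module generated under $\circ$ by $s_1^a$ is the whole $\tau$-invariant content-$(a,a)$ subspace, and likewise for $s_1^b(u^n+v^n)$. I would argue this representation-theoretically: the permutation module on the monomials of a two-part content is multiplicity free, its constituents being the two-row Specht modules $S^{(k-j,j)}$ by Young's rule, so the cyclic module generated by a vector is exactly the sum of the Specht components on which that vector has nonzero projection; it then remains to verify that $s_1^a$, respectively $s_1^b(u^n+v^n)$, projects nontrivially onto each component occurring in the invariant subspace, which can be checked by pairing with an explicit weight vector. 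A more elementary route is to work by leading terms \emph{inside} the fixed content: order the content monomials lexicographically and exhibit, for each, an element of the orbit-span having it as leading term, thereby forcing a dimension match. The smallest nontrivial instance is $n=3$, $k=4$, where $s_1^2,\ s_1^2\circ(23),\ s_1^2\circ(13)$ already constitute a nondegenerate $3\times 3$ system spanning the content-$(2,2)$ invariants; making this nondegeneracy uniform in $a,b,n$ is the crux of the argument.
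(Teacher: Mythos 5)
Your two reductions are sound as far as they go. The passage from arbitrary invariants to the elements $p_{(a,b)}$ is exactly right (it is also the paper's first step), and your $d\ge 2$ induction, based on the identity $p_n\,p_{(a-n,b)}=p_{(a,b)}+\big(u^nv^{a-n}u^b+v^nu^{a-n}v^b\big)$ with error term $p_{(n+b,a-n)}\circ\sigma$ of imbalance $d-2$, is correct and is genuinely different from the paper, which instead reaches the contents $(kn,ln)$ by invoking the main result of \cite{BDDK} for $U=u^n$, $V=v^n$. But the proposal is not a proof: the base cases $d=0,1$, i.e.\ the membership of $p_{(a,a)}$ and of $p_{(b+n,b)}$ in your subalgebra $A$, are exactly where you stop. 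You reduce them to the assertion that the $\Sym(k)$-orbit under $\circ$ of $s_1^a$ (respectively of $s_1^b p_n$) spans the whole $\tau$-invariant subspace of the relevant content(s), you sketch two possible strategies, and you explicitly defer the verification (``the crux''). That deferred assertion carries the entire weight of the theorem, so what you have is a reduction, not a proof.

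Two concrete cautions about the deferred claim. First, the representation-theoretic route is not bookkeeping: multiplicity-freeness of $M^{(a,a)}$ reduces the $d=0$ case to showing that $s_1^a$ projects nontrivially onto \emph{every} Specht component $S^{(2a-j,j)}$ with $j$ even, which amounts to a nonvanishing (non-orthogonality) statement for the one-dimensional fixed spaces of $\Sym(a)\times\Sym(a)$ and of $\Sym(2)\wr\Sym(a)$ inside each such component; nothing in your outline establishes this. Second, for $d=1$ your spanning claim is strictly stronger than what you need, and its truth is doubtful when $b\ge n$: in the contents $(b+n,b)$, $(b,b+n)$ the $S$-algebra $A$ also contains orbits of other products (for instance $s_1^{b-n}p_n^3$), so even granting the theorem one cannot conclude that the orbit of $s_1^bp_n$ alone spans; only $p_{(b+n,b)}\in A$ is required. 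The paper closes precisely this gap with an elementary device your argument lacks: writing $W(u,v)=u^{b-1}v^c$, one has $s_1p_{(b,c)}=\big(uvuW(u,v)+vuvW(v,u)\big)+\big(vuuW(u,v)+uvvW(v,u)\big)$, and comparing this with $(s_1p_{(b,c)})\circ(13)$ and $(s_1p_{(b,c)})\circ(23)$ isolates $uvu^bv^c+vuv^bu^c\in A$, whose image under the transposition $(2,b+2)$ is $p_{(b+1,c+1)}$. Iterating this diagonal step $(b,c)\to(b+1,c+1)$ starting from $s_1$ and from $p_n$ produces exactly your two base cases (and, in the paper, all $p_{(a+kn,a+ln)}$). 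Supplying either that trick or a genuine proof of your nondegeneracy claim is what remains to be done.
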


\begin{proof}

\noindent\textbf{Step 1: Structure of $S$-algebra ($\CuvD$,$\circ$).} The $S$-algebra ${\mathbb C}\langle u,v\rangle^{D_{2n}}$ is spanned by
\[
s_a^{\sigma}=(u^av^a+v^au^a)^{\sigma},\quad a=1,2,\ldots,\sigma\in \Sym(a+b),
\]
\[
p_{(b,c)}^{\tau}=(u^bv^c+v^bu^c)^{\tau}, \quad b-c\equiv 0\text{ (mod $n$)},\tau\in \Sym(b+c),
\]
i.e. it is generated as an $S$-algebra by
\[
s_a=u^av^a+v^au^a, \quad a=1,2,\ldots,
\]
\[
p_{(b,c)}=u^bv^c+v^bu^c, \quad b-c\equiv 0\text{ (mod $n$)}.
\]
	
\noindent\textbf{Step 2: Reduction to Special Case.} Let us consider the $S$-subalgebra $A$ of ${\mathbb C}\langle u,v\rangle^{D_{2n}}$ generated by
\[
s_a,p_{(b,c)},\quad a,b,c\equiv 0\text{ (mod $n$)}.
\]
We introduce new variables $U=u^n,V=v^n$ and consider the $S$-subalgebra $B={\mathbb C}\langle U,V\rangle^{\Sym(2)}$ of ${\mathbb C}\langle U,V\rangle$
of the symmetric polynomials with respect to $U,V$. By the main result of \cite{BDDK} the $S$-algebra $B$ is generated by $U+V$ and $UV+VU$.
Considering ${\mathbb C}\langle U,V\rangle$ as a subalgebra of ${\mathbb C}\langle u,v\rangle$, we obtain that $B$ is a subalgebra of $A$.
It has the property that all elements of $B$ are linear combinations of monomials, where $u$ and $v$ participate consequently multiple of $n$ times.
Applying the action of the symmetric groups $\Sym(nk)$ on the corresponding homogeneous component, $k=1,2,\ldots$,
we obtain that the $S$-subalgebra $B$ of ${\mathbb C}\langle u,v\rangle$
is generated by $s_n$ and $p_n=p_{(n,0)}$.
	
\noindent\textbf{Step 3: Induction.}
 We shall use the idea of the first proof  for $K\langle x_1,x_2\rangle^{\Sym(2)}$ given in \cite[Remark 4.6]{BDDK}.
Let $D$ be the $S$-subalgebra of $\CuvD$ generated by $s_1$ and $p_n$.
The proof of the theorem will be completed
if we show that $p_{(b,c)}\in D$ for all $b,c$ such that
$b\equiv c\text{ (mod $n$)}$.
Thus we may express
\[
p_{(b,c)}=p_{(a+kn,a+ln)}
\]
for $a,k,l\geq0$. We shall apply an induction on $a\geq0$. Initially we observe the following.

\

\noindent Observation: Let $W(u,v)=u^{b-1}v^c$. Then
\begin{align}
	s_1p_{(b,c)}&=(uv+vu)(u^bv^c+v^bu^c)=(uvu^bv^c+vuv^bu^c)+(vu^{b+1}v^c+uv^{b+1}u^c)\nonumber\\
	&=(uvuW(u,v)+vuvW(v,u))+(vuuW(u,v)+uvvW(v,u))\in D,\nonumber
\end{align}
\[
(s_1p_{(b,c)})\circ(13)=(uvuW(u,v)+vuvW(v,u))+(uuvW(u,v)+vvuW(v,u))\in D,
\]
\[
(s_1p_{(b,c)})\circ(23)=(uuvW(u,v)+vvuW(v,u))+(vuuW(u,v)+uvvW(v,u))\in D,
\]
These three equalities imply that
\[
uvuW(u,v)+vuvW(v,u)=uvu^bv^c+vuv^bu^c\in D,
\]
\[
(uvu^bv^c+vuv^bu^c)\circ (2,b+2)=p_{(b+1,c+1)}\in D.
\]

\noindent Thus, we obtain $s_1s_i=s_1p_{(i,i)}\in D$ implies $s_{i+1}\in D$ for $i=1,\ldots,n-1$, successively;
that yields $s_n\in D$. Hence, in the second step of the proof we established that $p_{(k_0n,l_0n)}\in D$.
Therefore, it is sufficient to show that $p_{(b,c)}=p_{(a+kn,a+ln)}\in D$ implies $p_{(b+1,c+1)}=p_{(a+1+kn,a+1+ln)}\in D$,
which is a direct consequence of the observation above.

\end{proof}

\begin{example}
	Consider the element $u^4v+v^4u\in\Cuv^{D_{6}}$. Then we have the following.
	\begin{align}
		\frac{1}{2}\big((u^3+v^3)(uv+vu)\big)\circ(14)&=\frac{1}{2}(u^4v+v^4u+v^3uv+u^3vu)\circ(14)\nonumber\\
		&=\frac{1}{2}(u^4v+v^4u+uv^3v+vu^3u)\nonumber\\
		&=u^4v+v^4u.\nonumber
	\end{align}
\end{example}

\section*{Acknowledgements}

This study of the first named author was partially supported  by the European Union-NextGenerationEU,
through the National Recovery and Resilience Plan of the Republic of Bulgaria, project No. BG-RRP-2.004-0008-C01.

The third named author is very thankful to the Faculty of Mathematics and Informatics of
Sofia University and the Institute of Mathematics and Informatics of
the Bulgarian Academy of Sciences for the creative atmosphere and the warm hospitality during his visit
as a post-doctoral fellow when this project was carried out.


\end{document}